\begin{document}
\font\eufm = eufm10 scaled 1200
\def\u{{\bf u}}
\def\Beta{{\cal B}}
\def\cov{\mbox{cov}}
\def\corr{\mbox{corr}}
\def\D{{\mathbb D}}
\def\E{{\mathbb E}}
\def\Ff{\mbox{\eufm F}}
\def\N{{\cal N}}
\def\Nb{\mathbb N}
\def\P{{\mathbb P}}
\def\Rb{\mathbb R}
\def\vv{{\bf v}}
\def\X{{\bf X}}
\def\x{{\bf x}}
\def\di{\displaystyle}
\def\indi{1\hspace{-1,1mm}{\rm I}}
\def\toi#1{\mbox{\kern+5pt \hbox to 25pt{\rightarrowfill}
 \kern-28pt \raise-5pt   \hbox{$\scriptstyle #1 \to \infty $ }\kern+1pt }}
\def\too#1{\mbox{\kern+3pt \hbox to 23pt{\rightarrowfill}
 \kern-23pt \raise-5pt   \hbox{$\scriptstyle #1 \to 0 $} \kern+0pt }}
\def\tow{\stackrel{W}{\rightarrow}}
\def\top{{\;\buildrel \bf P  \over{\toi{n}}\;}}
\def\tr{{\rm \bf tr}}
\def\Var{{\mathbb V}\!\mbox{ar}}
\def\Vol{\mbox{Vol}}
\def\essinf{\mbox{\rm ess\hspace{+1pt}inf}}

\def\red{\color{red}}
\def\epsilon{\varepsilon}

\newtheorem{theorem}{Theorem}[section]
\newtheorem{lemma}{Lemma}[section]
\newtheorem{proposition}{Proposition}[section]
\newtheorem{corollary}{Corollary}[section]
\newtheorem{remark}{Remark}[section]

\begin{frontmatter}

\title{Coverage Problem revisited}

\author[spbu]{Alexey~Antonik\fnref{fn1}}
\ead{alexey.antonik@gmail.com}
\author[havu]{Alexandre Berred}
\ead{alexandre.berred@univ-lehavre.fr}
\author[spbu,spbeu]{Sergey~V.~Malov\fnref{fn1}\corref{cor}}
\ead{malovs@sm14820.spb.edu}

\cortext[cor]{Corresponding author}
\fntext[fn1]{These authors were supported in part by Russian Ministry of Education and Science Mega-grant no.11.G34.31.0068 (Dr. Stephen J. O'Brien, Principal Investigator).}

\address[spbu]{Theodosius Dobzhansky Center for Genome Bionformatics, St.-Petersburg State University, 
 199034, Sredniy avenue 41A, St.-Petersburg, Russia}
\address[havu]{Universit\'e du Havre, UFR Sciences et Techniques, BP 540, 76058, Le Havre
Cedex, France}
\address[spbeu]{Dept. of mathematics, St.-Petersburg Electrotechnical University ``LETI'',\\  197376, Prof. Popova str. 5, St.-Petersburg, Russia}

\begin{abstract}
Motivated by some problems in genome assembling, we investigate
properties of spacings from absolutely continuous distributions. Several results
on the asymptotic behavior of the maximal uniform and non-uniform $k$-spacings are
presented. Applications of these results to the coverage problem for the prediction
stage in genome assembling are also provided.
\end{abstract}

\begin{keyword}
coverage problem \sep spacings \sep uniform spacings \sep extremal types

\MSC primary 60F05 \sep secondary 60G70 \sep 62G30 \sep 62G32 \sep 92D20
\end{keyword}

\end{frontmatter}

\section{Introduction}
\label{sec:intro}

With the development of next generation random shotgun sequencing technology 
it is important to construct accurate estimations {\it a priori} of the fold
coverage, required for a 
confident sampling of the sequenced genome.    

The human genome contains approximately $3.2\cdot 10^9$ pairs of bases (bp). 
Next generation sequencers (NGS) process multiple copies of the original DNA or its
parts. However, due to the technological restrictions only short reads (about 200 bp) of these copies can be acquired. 
Thus, in order to reconstruct the entire genome one needs to assemble acquired reads. The latter is possible only 
if the source genome is covered contiguously with overlaps of the lengths prescribed in advance. 

The question posed in this paper is: {\it ``How many reads we need to cover the 
whole genome
with sufficiently large probability?''}. It is important to mention that some 
reads may have 
errors. 
Therefore, with an eye to resolve these errors one is required to cover the entire genome several times ($r$-times).
 
Modern assembling techniques assume that the cumulative length of reads exceeds
40 times the length of original DNA sequence. 
Lander and Waterman \cite{lwr88} introduced an efficient heuristic method 
for evaluating the genome coverage 
taking into account possible gaps and targets in the genome. 
Roach \cite{roa95} improved these results by introducing spacings and performing considerations in the increasing scale, {\em i.e.} when $n\to\infty$. 
By using properties of the uniform spacings, similar results on coverage of the interval $[0,1]$ were obtained, among others,
in \cite{lev39, wss59, sig78, sig79, shl82, hul03}.

We are going to study the problem of full $r$-times coverage of the interval $[0,1]$
by random subsegments of small length under uniform and other distributions of the subsegments locations.
The entire genome is represented by the interval $[0,1]$, reads are subsegments of length $Y_i=L_i/N$, where $L_i$ is
the length of $i$-th read and $N$ is the total length of the genome sequence.
The locations of reads, determined by its left ends, are assumed to be randomly distributed over the entire genome.
The length of reads $L_i$ can be either random or deterministic. First, we
assume that it is deterministic and the reads are of same length,
{\em i.e.} $L_i=L$ and $Y_i=L/N$ for any $i$. 
In this case, the $i$-th read is characterized only by its left end, which is a random variable $X_i$ varying in
$[0,1]$. In addition, we assume that $X_1$,~\ldots, $X_n$ are independent and
identically distributed ({\em i.i.d.}) random variables, where $n$ is the number of available reads.

The problem of covering the interval $[0,1]$ by random segments of
fixed length $l$, $0<l<1$, can be rewritten in terms of the maximal uniform
spacings. More precisely, denote by $X_{1,n}\leq\ldots\leq X_{n,n}$ the order
statistics associated to $X_1,\ldots,X_n$ and define the spacings
$S_{0,n}=X_{1,n}$, $S_{i,n}=X_{i+1,n}-X_{i,n}$, $i=1,\ldots,n-1$ and
$S_{n,n}=1-X_{n,n}$. Denote by $M_{n+1,n}\leq\ldots\leq
M_{1,n}$ the order statistics corresponding to $S_{0,n}$,~\ldots, $S_{n,n}$.
Thus the coverage probability of the interval $[0,1]$ by segments of length $l$ is equal to $\P(M_{1,n}<l)$.

To the best of our knowledge, exact formula for this probability was obtained in \cite{lev39} (see also \cite{km63}). 
However, this result is absolutely unpractical for large $n$. Asymptotic distribution of maximal and minimal uniform spacings were obtained in \cite{wss59}. 
Asymptotic behaviour of maximal $r$-spacings were investigated by Holst \cite{hst80}. Dembo \& Karlin \citep{dkr92} studied asymptotic formulas and its rate of convergence for the $k$-th maximal and $k$-th minimal uniform $r$-spacings as a particular case of the $r$-scan processes by using Chen--Stein method of Poisson approximations (see \cite{chn75}). 
They had also extended asymptotic formulas to the some cases of non-uniform $r$-spacings. 
Some further investigations were performed, among others, in \citep{glz94}, where a number of such asymptotics were studied and compared to each other. 
Note that asymptotics for minimal $r$-spacings was obtained for a wide class of probability density functions (PDFs) of the initial variables. 
At the same time, for the maximal $r$-spacings such results were obtained under the piecewise-constant PDFs only. 
Barbe \cite{brb92} obtained independently an approximation formula for maximal 
spacings from  bounded support non-uniform distributions with PDF 
bounded away from zero.
Deheuvels \cite{dvs86} derived general results for maximal spacings under the wide class of absolutely continuous distributions.
The coverage problem for this class was further developed in \cite{hus88}.

The problem of $r$-times coverage of the interval $[0,1]$ by random segments of length $l$ can be reformulated in terms of $r$-spacings
$S_{0,n}^{(r)}=X_{r,n}$,
$S_{i,n}^{(r)}=X_{i+r,n}-X_{i,n}$ for $i=1,\ldots,n-r$, and
$S_{n-r+1,n}^{(r)}=1-X_{n-r+1,n}$.
Denote by $M_{n-r+2,n}^{(r)}\leq\ldots\leq M_{1,n}^{(r)}$ order statistics associated to them.
The probability of $r$-times coverage of the interval $[0,1]$
by random segments of length $l$ is equal to $\P(M_{1,n}^{(r)}<l)$. 
Of course, the spacings and the $1$-spacings are the same objects,  
therefore we will keep the notation for them throughout the paper.

The paper is organized as follows. The uniform case is studied in 
Section~\ref{sec:unif}, where we present some improved asymptotic results for 
the uniform $r$-spacings.
Some ideas on the exact distribution of the uniform $r$-spacings are expounded 
in Subsection~\ref{sub:eunif}.
Asymptotic results on the distribution of maximal $r$-spacings for special 
cases of non-uniform bounded support distributions are
presented in Section~\ref{sec:mixed}.
Extension of the asymptotic results due to \cite{dvs86} to the case of $r$-spacings are given in Section~\ref{sec:wide}.
Simulation and numerical results are provided in Section~\ref{sec:simul}.
Assessment of the weak convergent are given in Subsection~\ref{sub:simul}.
Applications to the genome coverage are considered in Subsection~\ref{sub:covapp}.
Proofs are postponed until Section~\ref{sec:proofs}.

\section{Uniform distribution}
\label{sec:unif}
\subsection{Approximation results}\label{sub:unif}

Let $U_1,\ldots,U_n$ be {\em i.i.d.} standard uniform random variables. As
in the introduction, denote by
$U_{1,n}\leq\ldots\leq U_{n,n}$
the associated order statistics and set, by convention, $U_{0,n}=0$, and $U_{n+1,n}=1$.
Consider the corresponding $r$-spacings
$$S_{i,n}^{(r)}=U_{i+r,n}-U_{i,n},\quad i=1,\ldots,n-r,$$
$S_{0,n}^{(r)}=U_{r,n}$ and $S^{(r)}_{n-r+1,n}=1-U_{n-r+1,n}$,
and the corresponding $r$-spacings order statistics
$M_{n-r+2,n}^{(r)}\leq\ldots\leq M_{1,n}^{(r)}$.
Let $E_1,E_2,\ldots$ be a sequence of {\em i.i.d.} random variables having
the exponential distribution with the rate $1$. Denote by $S_n=\sum_{i=1}^n
E_i$ the partial sums associated to $E_1,E_2,\ldots$ ($S_0=0$).

As is well known (see for instance \cite{pyk65}), the vector of uniform order statistics
$\{U_{i,n},\;0\leq i\leq n+1\}$ is equal in distribution
to $\{S_i/S_{n+1},\;0\leq i\leq n+1\}$. Then
the corresponding $r$-spacings $\{S_{i,n}^{(r)},\;0\leq i\leq  n-r\}$ are
equal in distribution to $\{(S_{i+r}-S_i)/S_{n+1},\;0\leq i\leq  n-r\}$.
Therefore the distribution of the $k$-th
maximal $r$-spacing has the same distribution as the $k$-th maximum of
$\{(S_{i+r}-S_i)/S_{n+1},\;0\leq i\leq  n-r\}$. In particular for $k=1$,
$$M_{1,n}^{(r)}\buildrel {\mbox{\scriptsize d}} \over=\max
\{(S_{i+r}-S_i)/S_{n+1},\;0\leq i\leq  n-r\}.$$
By Theorem~7.4 of \cite{rot86}, the
limit behavior in distribution of
$\widetilde M_n^{(r)}=\max\{S_{i+r}-S_i,\;0\leq i\leq  n-r\}$
and  $\widehat M_n^{(r)}=\max\{\Gamma_1,\ldots,\Gamma_n\}$ are the same, where
$\Gamma_1,\ldots,\Gamma_n$ are {\em i.i.d.} random
variables having the $\Gamma(r,1)$ distribution, and
\begin{equation}
\label{equ:limmaver}
\lim_{n\to\infty}\P(\widetilde M_n^{(r)} - b_n^{(r)} \leq x)=
\lim_{n\to\infty}\P(\widehat M_n^{(r)} - b_n^{(r)} \leq x) = e^{{-e}^{-x}},
\end{equation}
where $b_n^{(r)} = \log n + (r - 1) \log \log n - \log \Gamma(r)$. It follows
that
$$
\lim_{n\to\infty}\P(S_{n+1}M_{1,n}^{(r)} - b_n^{(r)} \leq x) = e^{{-e}^{-x}},
$$
Since, by the classical law of large numbers, $S_{n+1}/n \to 1$ in probability as $n\to\infty$,
we obtain
$$\lim_{n\to\infty}\P(n M_{1,n}^{(r)} - b_n^{(r)} \leq x) = e^{{-e}^{-x}}.$$
This asymptotic result was given by Holst \cite{hst80}. The same formula was obtained in \cite{dkr92} via the Poisson approximation. Unfortunately for $r>1$ it does not give a good approximation even for a such large $n$ as $10^7$.

Another approach provides the Proposition \ref{pro:inda}. Indeed,
$$
\P(n M_{1,n}^{(r)}<u)\sim \P(\widetilde M_n^{(r)}<u)\geq \P(\widehat M_n^{(r)}<u)
$$
for all $u$. By using Theorem~1.5.1 of \cite{llr83}, we have
$$
\P(\widehat M_n^{(r)}<u_n)=(1-(1-G_r(u_n)))^n\sim e^{-n(1-G_r(u_n))}
$$
under the assumption that $n(1-G_r(u_n))\to e^{-x}$ as $n\to\infty$, where
$
G_r(x)=\int_0^{x} \frac{t^{r-1}}{\Gamma(r)} e^{-t}\,dt =1-
\sum_{i=1}^r \frac{x^{i-1}}{\Gamma(i)} e^{-x}
$
is the cumulative distribution function (CDF) of $\Gamma(r,1)$.
It turns out that
\begin{equation}
\label{equ:ldappr}
\P(\widehat M_n^{(r)}<u)\sim e^{-n(1-G_r(u))}
\end{equation}
gives a good approximation of $\P(\widehat M_k^{(r)}<u)$. We should remark that the Poisson approximation in \cite{dkr92} leads to the same estimator for $\P(M_{1,n}^{(r)}<u)$, which we have constructed by considering $n-r$ independent random variables of the $\Gamma(r,1)$-distribution.

In order to improve the approximation (\ref{equ:limmaver}), we need to replace the sequence $b_{n}^{(r)}$
with $b_{n,x}^{(r)}$ depending on $x$ and satisfying the equality
$$e^{b_{n,x}^{(r)}}/n= \sum\nolimits_{i=1}^r (b_{n,x}^{(r)}+x)^{i-1}/\Gamma(i)$$
or, equivalently,
\begin{equation}
\label{equ:sfteq}
b_{n,x}^{(r)}-\log n - \log\Bigl(\sum\nolimits_{i=1}^r
(b_{n,x}^{(r)}+x)^{i-1}/\Gamma(i)\Bigr)=0.
\end{equation}
For instance, the best estimate based on (\ref{equ:limmaver}) for the $95\%$
quantile of $\widehat M_k^{(r)}$ is obtained from (\ref{equ:sfteq}) for $x=-\log(-\log(0.95))\approx 2.97$.

\subsection{Exact results}\label{sub:eunif}

Denote by $\lambda_n$ the Lebesgue measure on $\mathbb{R}^n$.
Note that $(S_{0,n},\ldots,S_{n,n})\in\Delta^{n}(1)$,
where
$$
\Delta^{n}(p)=\Bigl\{y = (y_1, \ldots, y_{n+1})\in\mathbb{R}^{n+1}
\;\Big|\; y_1 \geq 0, \ldots, y_{n+1} \geq 0, \sum_{i=1}^{n+1} y_i = p\Bigr\}
$$
is an $n$-dimensional simplex of $\mathbb{R}^n$. The distribution function of
the maximal $r$-spacing can be expressed as
$$
\P(M^{(r)}_{1,\,n} \leq a) = \frac{\lambda_{n}\left(\bigl\{x\in\Delta^{n}(1)
\;\big|\;
\sum_{i=1}^{r} x_i \leq a, \ldots, \sum_{i=n-r+2}^{n+1} x_i \leq
a\bigr\}\right)}{\lambda_{n}\left(\Delta^{n}(1)\right)},
$$
$0\leq a\leq 1$. Note that
$$
{\Bigl\{x\in\Delta^{n}(1) \;\Big|\; \sum_{i=1}^{r} x_i \leq a, \ldots,
\sum_{i=n-r+2}^{n+1} x_i \leq a\Bigr\}} = \Delta^{n}(1) \setminus
\bigcup_{i=1}^{n-r+2} A_i,
$$
where $A_i=\bigl\{x \in\Delta^{n}(1)
\;\big|\; \sum_{j=i}^{i+r-1}x_j \geq a \bigr\}$. We have by the inclusion-exclusion
principle
$$
\lambda_{n}\left(\bigcup_{i=1}^{n-r+2} A_i\right) = \sum_{i=1}^{n-r+2} (-1)^{i -
1}
\mspace{-10mu}
\sum_{\substack{\scriptscriptstyle
J\subseteq\{1,\,\ldots,\,n-r+2\}\\\scriptstyle\#J=i}}
\mspace{-10mu}
\lambda_{n}\left(\bigcap_{j\in J}A_j\right).
$$

It can be shown that for any set $J\subseteq\{1,\,\ldots,\,n-r+2\}$
with $k$ elements, there exists $u\in\mathbb{R}^{n+1}$ such
that
the following identity holds
$$
\bigcap_{j\in J}A_j = \Delta^{n}(\max\{0,\,1-ka\}) + \sum_{j\in J}
\Delta^{r-1}_j(a) + \{u\},
$$
where symbols $+$ and $\sum$ shall be understood in the Minkowski sense and
\arraycolsep=2pt
\begin{eqnarray*}
\Delta^{r-1}_j(a)= \Bigl\{y\in\mathbb{R}^{n+1}\; \Big|\; y_1&=& 0, \ldots, y_{j-1} = 0,\\
y_j \geq 0, \ldots, y_{j+r-1} &\geq& 0, y_{j+r} = 0, \ldots, y_{n+1} = 0,
\sum_{i=j}^{j+r-1} y_i = a\Bigr\}
\end{eqnarray*}
is a projection of $\Delta^{n}(a)$ onto some $r$-dimensional subspace of $\mathbb{R}^{n+1}$.
We finally obtain
$$
\mspace{-275mu}\P(M^{(r)}_{1,\,n} \leq a) = 1 - \lambda_{n}^{-1}\left( \Delta^{n}(1)\right)\times {}
$$
$$\mspace{25mu}{} \times\displaystyle\sum_{i=1}^{n-r+2} (-1)^{i -
1} \mspace{-15mu}\sum_{\substack{\scriptscriptstyle
J\subseteq\{1,\,\ldots,\,n-r+2\}\\\scriptstyle\#J=i}} \mspace{-15mu}
\lambda_{n}\Bigl(
\Delta^{n}(\max\{0,\,1-ia\}) + \sum_{j\in J} \Delta^{r-1}_j(a)
 \Bigr)
  \,.
$$

Without going into details on how the Lebesque measure of a Minkow\-ski sum of 
simplexes can be computed,
we consider only the case $r=1$. It easily follows from the above result that
$$
\P(M^{(1)}_{1,\,n} \leq a) = 1 - \sum\nolimits_{j=1}^{n+1} (-1)^{j - 1}
\binom{n + 1}{j}\max\{0,\,1-ja\}^{n}.
$$

\section{Bounded support distributions}
\label{sec:mixed}

In this section, we consider the limit behavior of the maximal $r$-spacings 
based on 
absolutely continuous distributions having bounded support $[A,B]$.
Throughout the section, we keep the notation introduced in 
Section~\ref{sec:intro}, except that we redefine
$S_{0,n}^{(r)}=X_{r,n}-A$ and $S_{n-r+1,n}=B-X_{n-r+1,n}$.
Our objective is to extend the results of uniform
$r$-spacings. For fixed $r\ge 1$, let $(f_{n,r})_{n\in\Nb }$ be a sequence of
uniformly equicontinuous and non-negative functions such that
\begin{equation}
\label{equ:apprf}
\sup_{x>0}|\P(n M_{1,n}^{(r)}<x)-\exp(-n f_{n,r}(x))|\to 0\quad\mbox{as}\quad n\to\infty,
\end{equation}
where $M_{1,n}^{(r)}$ is the uniform maximal $r$-spacing. Set
$f_r(x)=1-G_r(x)$. Recall from Section~\ref{sec:unif} that $G_r(x)$ is the
CDF of $\Gamma(r,1)$. It follows that
\begin{equation}
\label{equ:mix2}
n(f_{n,r}(x_n)-f_{r}(x_n))\to 0\quad\mbox{as}\quad n\to\infty
\end{equation}
uniformly for all sequences $(x_n)$ and $n f_{n,r}(x_n)\to c$ for any sequence $(x_n)$ such that $n f_r(x_n)\to c$ as $n\to\infty$ for $c\in (\delta,M)$, where $\delta$ and $M$
are any fixed positive constants, $0<\delta<M<\infty$.

First, we state the following result for step PDFs, which can be construed 
as a result for
mixing uniform probability densities. 

\begin{theorem}
\label{teo:mix3}
Let $X_1$,~\ldots, $X_n$ be a sample from an absolutely continuous distribution
with the support $[A,B]$, $-\infty<A<B<\infty$, having PDF
$$
p(x)=
\begin{cases}
c_i \quad\mbox{for}\quad x\in [x_i,x_{i+1}), \; i=0,\ldots,m-1,\cr
c_m \quad\mbox{for}\quad x\in [x_m,x_{m+1}],\cr
0 \quad\mbox{otherwise},\cr
\end{cases}
$$
for some $A=x_0<\ldots<x_{m+1}=B$ and $c_i>0$, $i=0,\ldots,m$. Then,
\begin{equation}
\label{equ:mix3}
\sup_{x>0} \Bigl|\P(nM_{1,n}^{(r)}<x)-
\exp\Bigl(-n\Bigl(\sum\nolimits_{i=0}^m\! \theta_i f_{k_i,r}(c_i x)\Bigr)\Bigr)
\Bigr|\to 0
\end{equation}
as $n\to\infty$, where $\theta_i=
c_i(x_{i+1}-x_{i})$, $k_i=[n\theta_i]$, $i=0,\ldots,m$.
Moreover,
$$
\P(nM_{1,n}^{(r)}-c_{\min}^{-1}b_n^{(r)}-c_{\min}^{-1}\log \theta^*<t)\to \exp(-\exp(-t))
$$
as $n\to\infty$, where $c_{\min}=\min(c_1,\ldots,c_m)$ and
$
\theta^*=\sum_{i \mid p_i=p_{\min}} \theta_i.
$
\end{theorem}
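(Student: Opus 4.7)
The plan is to treat the piecewise-constant model as a conditional mixture of uniform laws and then reduce the problem cell by cell. Let $N_i=\#\{j:X_j\in[x_i,x_{i+1})\}$; then $(N_0,\ldots,N_m)$ is multinomial with parameters $(n;\theta_0,\ldots,\theta_m)$, and conditionally on these counts the points in cell $[x_i,x_{i+1})$ are i.i.d.\ uniform on that cell, independently across cells. Bernstein's inequality gives $|N_i-k_i|=O(\sqrt{n\log n})$ with probability $1-o(1)$, so that---thanks to the uniform equicontinuity of $f_{n,r}$ and the consequence \eqref{equ:mix2} of \eqref{equ:apprf}---we may replace $N_i$ by $k_i=[n\theta_i]$ throughout without altering the limit.

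Next, split the $r$-spacings of $X_{1,n},\ldots,X_{n,n}$ into those contained in a single cell and those straddling one of the (at most $m$) interior breakpoints. There are only $O(rm)=O(1)$ straddling $r$-spacings, and a Poisson-tail estimate using the lower bound $p(x)\geq c_{\min}>0$ shows that each is of length at most $(\log\log n)/n$ with probability tending to one; this is negligible on the Gumbel scale $(\log n)/n$ governing the intra-cell maxima. Consequently, $M_{1,n}^{(r)}$ agrees with $\max_{0\leq i\leq m}M_i$ up to terms that vanish on the relevant scale, where $M_i$ denotes the maximal $r$-spacing built from the $N_i$ points inside cell $i$.

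Conditionally on $N_i=\ell_i$, the rescaled variable $M_i/(x_{i+1}-x_i)$ is the maximal uniform $r$-spacing with $\ell_i$ points, so \eqref{equ:apprf} gives $\P(M_i<y\mid N_i=\ell_i)=\exp(-\ell_i f_{\ell_i,r}(\ell_i y/(x_{i+1}-x_i)))+o(1)$ uniformly in $y>0$. Setting $y=x/n$, using $\ell_i/(n(x_{i+1}-x_i))=c_i\ell_i/(n\theta_i)\to c_i$, and invoking conditional independence across cells together with $k_i=n\theta_i+O(1)$, we obtain
$$
\P(nM_{1,n}^{(r)}<x)=\exp\Bigl(-n\sum\nolimits_{i=0}^m\theta_i f_{k_i,r}(c_ix)\Bigr)+o(1)
$$
uniformly in $x>0$. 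The main technical obstacle is precisely this uniformity: the several small substitutions ($N_i\mapsto k_i$, the rescaling of $y$, the discarding of straddling spacings) each produce per-cell errors that must be absorbed into a single $o(1)$ in the exponent, and the equicontinuity hypothesis on $f_{n,r}$ together with the form of \eqref{equ:mix2} are what permit this.

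For the Gumbel conclusion, set $x=c_{\min}^{-1}(b_n^{(r)}+\log\theta^*+t)$. Since $b_n^{(r)}-b_{k_i}^{(r)}=\log(n/k_i)+(r-1)\log(\log n/\log k_i)\to-\log\theta_i$ and $k f_{k,r}(b_k^{(r)}+s)\to e^{-s}$, a direct computation gives $n\theta_i f_{k_i,r}(c_ix)\to\theta_i e^{-t}/\theta^*$ for each cell with $c_i=c_{\min}$, while for cells with $c_i>c_{\min}$ the argument $c_ix$ exceeds $b_n^{(r)}$ by an amount tending to $+\infty$, so the contribution vanishes. Summing over $i$ yields $n\sum_i\theta_i f_{k_i,r}(c_ix)\to e^{-t}$, and the desired Gumbel limit follows from the approximation established in the previous paragraph.
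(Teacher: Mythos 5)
Your proposal is correct and follows essentially the same route as the paper: condition on the multinomial cell counts, apply the uniform-spacing approximation \eqref{equ:apprf} within each cell after rescaling, replace the random counts $N_i$ by $k_i=[n\theta_i]$ via concentration and \eqref{equ:mix2}, and obtain the Gumbel limit by substituting $x=c_{\min}^{-1}(b_n^{(r)}+\log\theta^*+t)$ so that only the cells with $c_i=c_{\min}$ contribute. The one point where you diverge is the treatment of the $r$-spacings straddling the breakpoints: the paper invokes its exchangeability-based Lemma~\ref{lem:loc3} to show the maximal spacing avoids any fixed point with probability tending to one, whereas you bound the lengths of the $O(rm)$ straddling (and standalone boundary) spacings directly by $(\log\log n)/n$, which is a legitimate and arguably more elementary substitute.
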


The last assertion of theorem \ref{teo:mix3} was actually given in \cite[section 8]{dkr92}.
The next result deals with bounded PDF defined on $[A,B]$. 

\begin{theorem}
\label{teo:mix1}
Let $X_1$,~\ldots, $X_n$ be a sample from an absolutely continuous distribution
with the support $[A,B]$, where $-\infty<A<B<\infty$, and having a PDF $p(x)$; $T=\bigcup_{j=1}^{q} I_j=\{x \mid p(x)>0\}\subseteq [A,B]$ and $I_j$ be the intervals; $p_{\min}=\essinf_{x\in T} p(x)$; $p_+=p_{\min}+\epsilon$ for some $\epsilon>0$; $\widetilde I_{j}=\{x\in I_j \mid p(x)<p_+\}$, $j=1,\ldots,q$. Assume that $p_{\min}>0$ and $p(x)$ satisfies the H\"older's condition locally
\begin{equation}
\label{equ:Holder}
|p(x)-p(y)|\leq C |x-y|^{\alpha}\quad\mbox{for all}\quad x,y\in \widetilde I_{j},
\end{equation}
$j=1,\ldots,q$, with some $C$, $\alpha\!>\!0$ and some $\epsilon>0$.
Then
\begin{equation}
\label{equ:gform2}
\sup_{x>0}\Bigl|\P(nM_{1,n}^{(r)}<x)-\exp\Bigl(-n\int_{A}^{B} f_{n,r}(xp(u))
p(u)du\Bigr)\Bigr|\to 0
\end{equation}
as $n\to\infty$.
\end{theorem}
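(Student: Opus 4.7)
The strategy is to deduce Theorem~\ref{teo:mix1} from the step-PDF result of Theorem~\ref{teo:mix3} by a piecewise-constant approximation of $p$ that is refined with $n$. The guiding observation is that the event $\{nM_{1,n}^{(r)}<x\}$ is asymptotically governed by the region $\widetilde I=\bigcup_j\widetilde I_j$ where $p<p_{+}$: on its complement the density is bounded below by $p_{+}$, so the local maximal $r$-spacing there is of order $b_n^{(r)}/(np_{+})$, strictly below the threshold $b_n^{(r)}/(np_{\min})$ at which the distribution of $nM_{1,n}^{(r)}$ transitions from $0$ to $1$. This is precisely why the H\"older condition is required only on $\widetilde I_j$.

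I would choose a resolution $\delta_n=n^{-\beta}\to 0$ with $\beta\in(1/\alpha,1)$, partition $[A,B]$ into bins of length at most $\delta_n$, and define a step density $\tilde p_n$ by setting $c_i=p(\xi_i)$ at a representative point $\xi_i$ in each bin meeting $\widetilde I$ and by extending $\tilde p_n$ to a convenient step majorant (with values $\geq p_{+}$) on the remaining bins, renormalized to integrate to $1$. Applying Theorem~\ref{teo:mix3} to $\tilde p_n$ gives
\begin{equation*}
\P(nM_{1,n}^{(r)}<x;\,\tilde p_n)=\exp\Bigl(-n\sum\nolimits_i\theta_i^{(n)}f_{k_i^{(n)},r}(c_i^{(n)} x)\Bigr)+o(1)
\end{equation*}
uniformly in $x$. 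The H\"older bound $|p(u)-c_i|\leq C\delta_n^{\alpha}$ on bins in $\widetilde I$, combined with the uniform equicontinuity of $(f_{n,r})$ and property~(\ref{equ:mix2}), ensures that $n$ times the sum approximates $n\int_A^B f_{n,r}(xp(u))p(u)\,du$ within error $o(1)$, uniformly in $x$ on the transition window $x\asymp b_n^{(r)}/p_{\min}$; the contribution to the sum from bins outside $\widetilde I$ is negligible there, since $f_{n,r}(c_i^{(n)} x)$ decays exponentially once $c_i^{(n)} x\gg b_n^{(r)}$.

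To replace $\tilde p_n$ by $p$ in the probability, I would use a total-variation coupling of the two samples on the bins meeting $\widetilde I$, contributing an error of order $n\int_{\widetilde I}|p-\tilde p_n|\,du=O(n\delta_n^\alpha)=o(1)$; on the complement of $\widetilde I$ both densities have local maximal $r$-spacing below the threshold, so neither contributes to the event. Combining these estimates yields (\ref{equ:gform2}) on the transition window, and a routine tail truncation extends uniformity to all $x>0$ (for $x$ well below the window both sides are $o(1)$, and well above both are $1-o(1)$). The main obstacle is calibrating $\delta_n$ so that three requirements hold simultaneously: (i) the Riemann-sum error in the exponent is $o(1)$ uniformly over the full transition window; (ii) the coupling error $O(n\delta_n^\alpha)$ vanishes; and (iii) each bin count $k_i^{(n)}=[n\theta_i^{(n)}]$ grows to infinity so that Theorem~\ref{teo:mix3} genuinely applies on scale. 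An exponent $\beta\in(1/\alpha,1)$ meets all three, but controlling (i) and (ii) together under the minimal regularity provided by the local H\"older condition is the most delicate part.
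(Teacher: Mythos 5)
Your overall architecture is the right one and matches the paper's: reduce to the low\--density region $\widetilde T=\bigcup_j\widetilde I_j$ (this is the paper's Lemma~\ref{lem:loc5}), show the complement contributes nothing to the exponent (Lemma~\ref{lem:loc}), and handle $\widetilde T$ by a piecewise\--constant approximation refined with $n$ so that Theorem~\ref{teo:mix3} can be invoked (Lemma~\ref{lem:mix1}). But the step where you transfer from the step density $\tilde p_n$ back to $p$ contains a genuine gap. You propose a total\--variation coupling of the two $n$\--samples, with error $O(n\int|p-\tilde p_n|)=O(n\delta_n^\alpha)$, and simultaneously you need each bin count $k_i^{(n)}=[n\theta_i^{(n)}]\to\infty$, i.e.\ $n\delta_n\to\infty$. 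These two requirements force $n^{-1}\ll\delta_n\ll n^{-1/\alpha}$, which is possible only when $\alpha>1$; your stated window $\beta\in(1/\alpha,1)$ is empty for every $\alpha\le 1$, and H\"older exponents $\alpha>1$ make $p$ locally constant, so the theorem would reduce to the step case. In other words, the TV coupling is far too crude: it demands the density error be $o(1/n)$, whereas the problem only tolerates bins of width $\gg 1/n$.

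The paper avoids this by never coupling in total variation. Instead it builds the step density so that the quantile transport $G_m=F_m^{-1}\circ F$ is expansive, $G_m(x_2)-G_m(x_1)\ge x_2-x_1$, giving the monotone comparison $\P(M_{1,n}^{(r)}<x)\ge\P(M_{1,n}^{+(r)}<x)$ pathwise, and a second step density gives the reverse inequality; the two bounds are then squeezed together because the exponents $n\int f_{n,r}(xp_m)p_m$ and $n\int f_{n,r}(xp)p$ agree up to $o(1)$ as soon as $\sup|p_m-p|=o(1/\log n)$ (since $x_n=O(\log n)$ on the transition window, cf.\ (\ref{equ:mix2})). That weaker requirement needs only $m\gg\log^{1/\alpha}n$ bins, fully compatible with $k_i\to\infty$. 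To repair your argument you would need to replace the TV coupling by such a stochastic\--domination sandwich (or some other comparison that exploits monotonicity of the maximal spacing under expansive maps). Separately, note that applying Theorem~\ref{teo:mix3} with $m=m(n)\to\infty$ is not automatic: the $o(1)$ error must be uniform over a growing number of blocks, which the paper secures by requiring $m=o(n^{1/3})$ via the estimate (\ref{equ:int2}); you flag this as delicate but leave it unresolved.
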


Good approximation in the particular case $r=1$ due to Barbe \cite{brb92} can 
be obtained in the following corollary.

\begin{corollary}
\label{cor:mix1}
Under the assumptions of Theorem \ref{teo:mix1},
$$
\sup_{x>0} \Bigl|\P(nM_{1,n}<x)-\exp\Bigl(-n\int_{A}^{B} e^{-xp(u)}
p(u)du\Bigr)\Bigr|\to 0\quad\mbox{as}\quad n\to\infty.
$$
\end{corollary}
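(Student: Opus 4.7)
The plan is to deduce Corollary~\ref{cor:mix1} as the direct specialization of Theorem~\ref{teo:mix1} to $r=1$. Since $G_1$ is the standard exponential CDF, $f_1(x)=1-G_1(x)=e^{-x}$, so the natural candidate for the approximating sequence is the constant sequence $f_{n,1}(x)\equiv e^{-x}$. This is trivially non-negative and uniformly equicontinuous on any bounded interval, so the only thing to verify before invoking Theorem~\ref{teo:mix1} is the pre-asymptotic bound (\ref{equ:apprf}) for $r=1$:
\begin{equation*}
\sup_{x>0}\bigl|\P(nM_{1,n}<x)-\exp(-ne^{-x})\bigr|\longrightarrow 0\quad(n\to\infty).
\end{equation*}

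To verify this, I would exploit the Pyke representation recalled in Subsection~\ref{sub:unif}: $M_{1,n}\stackrel{d}{=}\max_{1\le i\le n+1}E_i/S_{n+1}$, whence
\begin{equation*}
\P(nM_{1,n}<x)=\E\bigl[(1-e^{-xS_{n+1}/n})^{n+1}\bigr]
\end{equation*}
for $x>0$. The elementary expansion $\log(1-u)=-u+O(u^2)$ with $u=e^{-y}$ gives
\begin{equation*}
(1-e^{-y})^{n+1}-\exp(-(n+1)e^{-y})=\exp(-(n+1)e^{-y})\bigl[\exp(O((n+1)e^{-2y}))-1\bigr],
\end{equation*}
which tends to $0$ uniformly in $y>0$: in the region $y=\log n+O(1)$ the factor $(n+1)e^{-2y}=O(1/n)$ controls the bracket while $\exp(-(n+1)e^{-y})$ is bounded; for $y$ much smaller than $\log n$ both expressions are exponentially small; for $y$ much larger than $\log n$ both are within $o(1)$ of $1$. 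Finally, $S_{n+1}/n\to 1$ almost surely, and a Slutsky-type argument combined with the monotonicity of the CDFs involved transfers uniform convergence from the deterministic argument $y$ to the random argument $y=xS_{n+1}/n$, yielding (\ref{equ:apprf}).

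With $f_{n,1}(z)=e^{-z}$ admitted, Theorem~\ref{teo:mix1} applied to our PDF $p$ and to $r=1$ gives exactly the claim of the corollary upon substitution $f_{n,1}(xp(u))=e^{-xp(u)}$. The main obstacle is the uniformity in $x$ in (\ref{equ:apprf}): pointwise convergence to the Gumbel law is classical (Weiss), but the $\sup_{x>0}$ statement requires the separate treatment of the three $x$-regimes above together with the Slutsky passage through $S_{n+1}/n$; once those steps are dispatched the corollary follows mechanically from Theorem~\ref{teo:mix1}.
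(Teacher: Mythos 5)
Your overall reduction is the same as the paper's: Corollary~\ref{cor:mix1} is obtained by specializing Theorem~\ref{teo:mix1} to $r=1$ and replacing $f_{n,1}$ by $f_1(x)=1-G_1(x)=e^{-x}$, which is legitimate by the uniformity in (\ref{equ:mix2}); the paper offers no further argument beyond this. However, your supplementary verification of (\ref{equ:apprf}) for $r=1$ contains a genuine error. The identity $\P(nM_{1,n}<x)=\E\bigl[(1-e^{-xS_{n+1}/n})^{n+1}\bigr]$ is false: in the representation $M_{1,n}\stackrel{d}{=}\max_{1\le i\le n+1}E_i/S_{n+1}$ the maximum and the sum are not independent (conditionally on $S_{n+1}=s$ the vector $(E_1,\ldots,E_{n+1})$ is uniform on the simplex $\Delta^{n}(s)$, not i.i.d.\ exponential), so you may not integrate the product form $(1-e^{-y})^{n+1}$ against the law of $S_{n+1}$. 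What is true is the exact identity $\P(S_{n+1}M_{1,n}\le u)=(1-e^{-u})^{n+1}$, after which one must still pass from the random normalization $S_{n+1}$ to the deterministic one $n$.

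That passage is precisely the delicate point, and ``Slutsky plus monotonicity'' does not suffice for a $\sup_{x>0}$ statement: Remark~\ref{rem:1}(ii) of the paper records that $\sup_x|\P(nM_{U,1,n}^{(r)}<x)-\P(ncM_{U,1,n}^{(r)}<x)|\to 1$ for any fixed $c\ne 1$, so even an arbitrarily small fixed multiplicative perturbation destroys uniform closeness of the CDFs. The argument must use the rate $S_{n+1}/n=1+O_P(n^{-1/2})$ together with the localization of the transition region of the limiting CDF at $x=\log n+O(1)$, so that the effective perturbation $x(S_{n+1}/n-1)=O_P(\log n/\sqrt{n})\to 0$; outside that region both CDFs are uniformly close to $0$ or $1$, as in your three-regime analysis of the deterministic comparison. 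With the identity corrected and this quantitative Slutsky step supplied, your deduction from Theorem~\ref{teo:mix1} goes through and coincides with the paper's intended proof.
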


\section{Extended class of distributions}
\label{sec:wide}

In this section, the notation $M_{k,n}^{(r)}$ for the $k$-th maximal value from 
$S_{1,n}^{(r)}$,~\ldots, $S_{n-r,n}^{(r)}$ is used, as it was introduced in 
Section~\ref{sec:intro}. 
Following Deheuvels~\cite{dvs86}, we investigate  convergence in distribution of the maximal $r$-spacings and $k$-th maximal $r$-spacings for the three
domains of attraction of extreme values. Deheuvels \cite{dvs86} obtained limit theorems for classes of distributions related to the three extremal types of distributions. The same classes
will be considered in this section.

This three classes of distributions are defined as follows. Let $A=\inf\{x \mid F(x)>0\} < B=\sup\{x \mid F(x)<1\}$ and assume that
$p(x)>0$ for
all $x\in (A,B)$, $A>-\infty$ and $p(A_+)=\lim_{x\to A_+}p(x)>0$. Denote by
$G(u)=\inf\{x \mid 1-F(x)\leq u\}$ the inverse of survival function and set
$b_n=G(1/n)$ and $a_n=G(1/(ne))-G(1/n)$. 

{\it Gumbel type}. For all $x\in (-\infty,\infty)$,
\begin{equation}
\label{equ:Gumbel}
\lim_{n\to \infty} \P(a_n^{-1}(X_{n,n}-b_n)\leq x)=\lim_{n\to\infty}
F^{n}(a_n x+b_n)= e^{-e^{-x}}=\Lambda(x).
\end{equation}

{\it Fr\'echet type}. For all $x\in (0,\infty)$ and for some
$a>0$,
\begin{equation}
\label{equ:Frechet}
\lim_{n\to \infty} \P(b_n^{-1} X_{n,n}\leq x)=\lim_{n\to\infty}
F^{n}(b_n x)=e^{-x^{-a}}=\Phi_a(x)
\end{equation}

{\it Weibull type.}
For all $x\in (-\infty,0)$, for some $a>0$ and finite $B$,
\begin{equation}
\label{equ:Weibull}
\lim_{n\to \infty} \P((X_{n,n}-B)/(B-b_{n})\leq x)= e^{-(-x)^{a}}=\Psi_a(x).
\end{equation}

All these limit types can be combined and represented by the so called generalized extreme value distribution with CDF
$$
F(x;\xi)=\exp(-(1-\xi x)^{-1/\xi}).
$$
Cases $\xi=0$, $\xi>0$ and $\xi<0$ correspond to Gumbel's,
Fr\'echet and Weibull's extremal types respectively.

The following theorem describes the asymptotic distributions for the $k$-th maximal $r$-spacings associated to the three extremal types distributions.

\begin{theorem}
\label{teo:rspacing}
Let $X_1,\ldots,X_n$ be a sample from an absolutely continuous distribution
with a PDF $p(x)$. Assume that  $p(x)>0$ for all $x\in
(A,B)$,
$A>-\infty$ and $p(A_+)=\lim_{x\to A_+}p(x)>0$. Let $E_1,E_2,\ldots$ be a
sequence of {\em i.i.d.} random variables having the standard exponential
distribution $E(1)$.\\
(i) Assume that the original distribution satisfies (\ref{equ:Gumbel}) and
$p(x)$ is continuous and ultimately nonincreasing in the upper tail. Then
$$
\lim_{n\to\infty} \P(a_n^{-1} M_{k,n}^{(r)}\leq x)=H_{k,r}^{G}(x),
$$
where $H_{k,r}^{G}(x)$ is the distribution function of $k$-th maximum of
$$
\textstyle
\Bigl\{\sum\nolimits_{l=1}^r (j+l-1)^{-1}E_{j+l-1},\;j\geq 0\Bigr\}.
$$
(ii) Assume that the original distribution satisfies (\ref{equ:Frechet})
for some $a>0$. Then
$$
\lim_{n\to\infty} \P(b_n^{-1} M_{k,n}^{(r)}\leq x)=H_{k,r}^{F}(x),
$$
where $H_{k,r}^{F}(x)$ is the distribution function of $k$-th maximum of
$$
\textstyle
\Bigl\{\sum\nolimits_{l=1}^r \bigl((\sum\nolimits_{s=1}^{j+l-1}
E_{s})^{-1/a}-(\sum\nolimits_{s=1}^{j+l} E_{s})^{-1/a}\bigr),\;j\geq 0\Bigr\}.
$$
(iii) Assume that the  original distribution satisfies (\ref{equ:Weibull})
for some $a>1$. Then
$$
\lim_{n\to\infty} \P(M_{k,n}^{(r)}/(B-b_n)\leq x)=H_{k,r}^{W}(x),
$$
where $H_{k,r}^{W}(x)$ is the distribution function of $k$-th maximum of
$$
\textstyle
\Bigl\{\sum\nolimits_{l=1}^r \bigl((\sum\nolimits_{s=1}^{j+l}
E_{s})^{1/a}-(\sum\nolimits_{s=1}^{j+l-1} E_{s})^{1/a}\bigr),\;j\geq 0\Bigr\}.
$$
\end{theorem}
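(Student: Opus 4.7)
\quad My approach is to reduce all three cases to a common Pyke--R\'enyi framework and then to exploit the relevant domain-of-attraction condition to approximate $F^{-1}$ near the upper endpoint. Since $X_{i,n}\stackrel{d}{=}F^{-1}(V_{i,n})$ for uniform order statistics $V_{i,n}\stackrel{d}{=}S_i/S_{n+1}$, we have $1-V_{n-j+1,n}\stackrel{d}{=}\xi_j/S_{n+1}$ after reversing the exponentials in the upper tail, where $\xi_j=E_1+\cdots+E_j$; by the law of large numbers $1-V_{n-j+1,n}\sim\xi_j/n$.

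Next I would combine this with the expansion of $F^{-1}(1-u)$ dictated by each extremal type to obtain, jointly in $j$ on any finite window, the limiting $r$-spacing. In case (i), the hypotheses $p(A_+)>0$ together with the ultimate monotonicity of $p$ put $F$ in the Gumbel domain, and the de Haan representation $F^{-1}(1-u)-F^{-1}(1-tu)\sim a(u)\log t$ yields $a_n^{-1}\Delta_j\to\log(\xi_{j+1}/\xi_j)$, where $\Delta_j=X_{n-j+1,n}-X_{n-j,n}$. The standard Beta-product decomposition of uniform order statistics implies that the variables $\log(\xi_{j+1}/\xi_j)$ are mutually independent with $\mathrm{Exp}(j)$ law, so they may be re-expressed as $E_j/j$ with a fresh i.i.d.\ family; telescoping $r$ consecutive 1-spacings then produces the stated Gumbel limit $\sum_{l=1}^r E_{j+l-1}/(j+l-1)$. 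In case (ii) the Fr\'echet hypothesis gives $F^{-1}(1-u)\sim b_n(nu)^{-1/a}$, hence $X_{n-j+1,n}\sim b_n\xi_j^{-1/a}$, and telescoping yields the claimed limit. Case (iii) is entirely analogous, using $F^{-1}(1-u)\sim B-(B-b_n)(nu)^{1/a}$.

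Two steps remain. First, I would show that the $k$-th maximal $r$-spacing is asymptotically attained among the upper-tail spacings, not in the bulk or near $A$: the typical bulk $r$-spacing has scale $r/(n\,p(x))$, and the theorem's conditions (in particular $p(A_+)>0$, the ultimate monotonicity of $p$ in (i), and $a>1$ in (iii)) imply that the upper-tail scales $a_n$, $b_n$ and $B-b_n$ respectively dominate the bulk scale, so the maximum is achieved in the upper tail. Second, I would pass from finite-window convergence to the infinite supremum that appears in the limit via a Borel--Cantelli / moment estimate: bounds such as $\E\bigl[\sum_{l=1}^r E_{j+l-1}/(j+l-1)\bigr]\leq r/j$, and analogous tail bounds for the Fr\'echet and Weibull expressions, show that the limiting supremum is a.s.\ finite and that the contribution of terms with index $j>N$ can be made uniformly small by choosing $N$ large. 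I expect this second step, and especially the sharp control of the bulk scale in the Weibull case (where $B-b_n$ is comparatively close to the interior spacings), to be the main technical obstacle.
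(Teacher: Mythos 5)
Your proposal is correct and takes essentially the same route as the paper, which reduces the $r$-spacing claim to Deheuvels' $r=1$ results via the telescoping identity $S_{n-j,n}^{(r)}=\sum_{l=0}^{r-1}S_{n-j+l,n}$, establishes joint convergence of a finite window of normalized upper-tail spacings, and disposes of the remaining indices (bulk included) by a union bound reducing to the $r=1$ tail estimate. The only difference is that you re-derive Deheuvels' finite-window building blocks (quantile transform, R\'enyi representation, regular variation of $F^{-1}$ near the upper endpoint) where the paper simply cites them.
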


The condition $p(A_+)>0$ of the Theorem~\ref{teo:rspacing} can be weakened in particular cases. For
example, considering symmetric distribution leads to the following corollary.

\begin{corollary}
\label{cor:symmetry}
Let $X_1,\ldots,X_n$ be a sample from an absolutely continuous distribution
with PDF $p(x)$. Assume that $p(x)>0$ for all $x\in
(A,B)$ and $p(A+x)=p(B-x)$ for almost all $x$. \\
(i) Assume that the  original distribution satisfies (\ref{equ:Gumbel}) and
$p(x)$ is continuous and ultimately nonincreasing in the upper tail. Then
$$
\lim_{n\to\infty} \P(a_{[n/2]}^{-1} M_{k,n}^{(r)}\leq x)=(H_{k,r}^{G}(x))^2.
$$
\\
(ii) Assume that the  original distribution satisfies (\ref{equ:Frechet}) for
some $a>0$. Then
$$
\lim_{n\to\infty} \P(b_{[n/2]}^{-1} M_{k,n}^{(r)}\leq x)=(H_{k,r}^{F}(x))^2.
$$
\\
(iii) Assume that the  original distribution satisfies (\ref{equ:Weibull}) for
some $a>1$. Then
$$
\lim_{n\to\infty} \P(M_{k,n}^{(r)}/(B-b_{[n/2]})\leq x)=(H_{k,r}^{W}(x))^2.
$$
\end{corollary}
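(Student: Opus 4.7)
The proof is based on splitting the sample at the midpoint $C=(A+B)/2$ of the support. The hypothesis $p(A+x)=p(B-x)$ is equivalent to the symmetry of $p$ about $C$. Setting $N_L=\#\{i:X_i<C\}$ and $N_R=n-N_L$ gives $N_L\sim\mathrm{Bin}(n,1/2)$ and hence $N_L/n\to 1/2$ almost surely. Conditional on $(N_L,N_R)$ and on which indices fall on which side of $C$, the left sub-sample is i.i.d.\ from $p_L=2p\,\mathbf{1}_{[A,C]}$, the right sub-sample is i.i.d.\ from $p_R=2p\,\mathbf{1}_{[C,B]}$, the two sub-samples are independent, and, by the symmetry, the reflection of the left sub-sample about $C$ has the same joint distribution as the right one.

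The next step is to apply Theorem~\ref{teo:rspacing} to each sub-sample separately. The sub-density $p_R$ inherits the upper-tail behaviour of $p$ at $B$, so $p_R$ lies in the same extremal domain as $p$ with the same index; its left endpoint $C$ is ``regular'' in the sense required by Theorem~\ref{teo:rspacing} because $p_R=2p>0$ in a neighbourhood of $C$. The natural scaling for a sample of size $\approx n/2$ drawn from $p_R$, obtained from the identity $G_R(u)=G(u/2)$ between the survival-function inverses, is asymptotically equivalent to the sequence $a_{[n/2]}$, $b_{[n/2]}$, or $B-b_{[n/2]}$ appearing in the corollary's three cases. Theorem~\ref{teo:rspacing} then yields convergence in distribution of the $k$-th maximum $r$-spacing within each half to a random variable with CDF $H_{k,r}^{G}$, $H_{k,r}^{F}$, or $H_{k,r}^{W}$, respectively.

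Since $p$ is bounded away from zero in a neighbourhood of $C$, any $r$-spacing of the full sample that straddles $C$ has length $O(\log n/n)$, which is negligible with respect to the rescalings of the corollary in all three regimes (in the Weibull case, the hypothesis $a>1$ is used at precisely this point). The $r$-spacings of the full sample therefore split asymptotically into two conditionally independent and, by symmetry, identically distributed groups; combining this with the previous convergence gives the product form $(H_{k,r}^{\cdot}(x))^2$ for the limit CDF of the scaled $k$-th maximum $r$-spacing. The main obstacle is the scaling bookkeeping in the second step: one must verify carefully, using the regularity of $G$ at small arguments in each of the three extremal regimes, that the sub-sample's natural scaling (derived from $G_R(u)=G(u/2)$ with sample size $\approx n/2$) coincides asymptotically with the sequences $a_{[n/2]}$, $b_{[n/2]}$, $B-b_{[n/2]}$ evaluated at $[n/2]$ for the original $G$.
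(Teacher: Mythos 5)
Your overall strategy --- split the sample at the centre of symmetry, condition on the allocation of points to the two halves so that they become independent samples from the truncated densities $p_L,p_R$, apply Theorem~\ref{teo:rspacing} to each half, and show that $r$-spacings straddling the centre are asymptotically negligible --- is exactly the route the paper takes. The paper handles the last point via Lemma~\ref{lem:loc5} and Remark~\ref{rem:2} rather than your direct $O(\log n/n)$ bound, but that is a cosmetic difference. (One small remark: the paper splits at the median $m_F$, which is the right notion when $A=-\infty$ or $B=+\infty$, as is permitted in the Gumbel and Fr\'echet cases; your midpoint $(A+B)/2$ only makes sense for bounded support.)

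The genuine problem is the step you yourself flag as ``the main obstacle'' and then assert rather than verify: the identification of the half-sample's natural scaling with $a_{[n/2]}$, $b_{[n/2]}$, $B-b_{[n/2]}$ computed from the \emph{original} $G$. From $G_R(u)=G(u/2)$, the scaling that Theorem~\ref{teo:rspacing} assigns to a sample of size $m=[n/2]$ from $p_R$ is $G_R(1/m)=G(1/(2m))\approx G(1/n)=b_n$, i.e.\ the original constants at index $n$, not at $[n/2]$. In the Fr\'echet domain $G$ is regularly varying at $0$ with index $-1/a$, so $b_{[n/2]}/b_n=G(2/n)/G(1/n)\to 2^{-1/a}\neq 1$, and similarly $(B-b_{[n/2]})/(B-b_n)\to 2^{1/a}$ in the Weibull domain; the asymptotic equivalence you invoke holds only in the Gumbel case, where the auxiliary sequence satisfies $a_{[n/2]}/a_n\to 1$. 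Carried through honestly with $b_{[n/2]}=G(1/[n/2])$ read literally, your argument yields $(H^{F}_{k,r}(2^{-1/a}x))^2$ in case (ii) and $(H^{W}_{k,r}(2^{1/a}x))^2$ in case (iii), not $(H^{F}_{k,r}(x))^2$ and $(H^{W}_{k,r}(x))^2$. The resolution --- implicit in the paper's proof, which applies Theorem~\ref{teo:rspacing} directly to the half-distributions $F_{\pm}$, and confirmed by the worked example (\ref{equ:trzs}), where the normalization is $\sqrt{2\kappa(1-\kappa)/n}=B-G(1/n)$ rather than $B-G(2/n)$ --- is that the constants in the corollary must be understood as those of the truncated half-distribution at sample size $[n/2]$, which coincide asymptotically with $a_n$, $b_n$, $B-b_n$ of the original distribution at the full index $n$. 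You need either to prove the statement with these constants or to make the factor $2^{\pm 1/a}$ explicit; as written, your scaling bookkeeping does not close in cases (ii) and (iii).
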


For applications in the coverage problems, introduce another sample
$Y_1$,~\ldots, $Y_n$ from a positive distribution with CDF $F_Y$ and independent of
$X_1$,~\ldots, $X_n$. Denote by $N_{n,r}(x)$ the number of
of $r$-spacings greater than $x\,Y_i $, {\em i.e.}  $N_{n,r}(x)=\#\{i\in
\{1,\ldots,n^*\} \mid S_{i,n}^{(r)}\geq x\,Y_i \}$. By applying
Theorem~\ref{teo:rspacing}, we have the following result.

\begin{corollary}
\label{cor:Gumbel}
Under the assumptions of Theorem \ref{teo:rspacing}, we have
$$
\P(N_{n,r}(x)\leq k)\toi{n}\int_{0}^{\infty} H_{k,r}^{X}(ux) dF_Y(u).
$$
with $X=G$ under (\ref{equ:Gumbel}), $X=F$ under (\ref{equ:Frechet}) or $X=W$
under (\ref{equ:Weibull}), respectively.
\end{corollary}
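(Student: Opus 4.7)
The plan is to condition on the auxiliary sample $Y_1,\ldots,Y_n$ and reduce the problem to Theorem~\ref{teo:rspacing}. First, I would observe that the event $\{N_{n,r}(x)\leq k\}$ is exactly the event that the $(k+1)$-th largest of the ratios $S_{i,n}^{(r)}/Y_i$ is strictly less than $x$. Since $(Y_i)$ is independent of $(X_i)$, Fubini gives
$$
\P(N_{n,r}(x)\leq k) \;=\; \int \P\!\bigl(N_{n,r}(x)\leq k \bigm| Y_1,\ldots,Y_n\bigr)\, d\P.
$$

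In the transparent case where $Y_1=\cdots=Y_n=Y$ is a single common random scaling, this collapses to
$$
\P(N_{n,r}(x)\leq k) \;=\; \int_0^\infty \P\!\bigl(M_{k+1,n}^{(r)} < xu\bigr)\, dF_Y(u),
$$
after which Theorem~\ref{teo:rspacing} applied inside the integral (with the appropriate normalizing sequence $a_n$, $b_n$ or $B-b_n$ according to which extremal type is assumed), together with the dominated convergence theorem (the integrand is bounded by $1$), produces the stated integral limit.

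For the general case of i.i.d. marks $(Y_i)$, I would use a marked-point-process argument. Theorem~\ref{teo:rspacing} describes explicitly the joint weak limit of the normalized top $r$-spacings; by the independence of $(X_i)$ and $(Y_i)$, the marks attached to these extremal positions are asymptotically i.i.d. $F_Y$-distributed. The continuous mapping theorem, applied to the counting functional $N_{n,r}(x)$ which is a.s. continuous with respect to the limiting marked point process, then yields the announced integral formula.

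The main obstacle will be controlling the contribution of those indices $i$ whose spacing $S_{i,n}^{(r)}$ is not extreme but whose mark $Y_i$ happens to be unusually small, since such a pair may still satisfy $S_{i,n}^{(r)}\geq xY_i$ and thereby inflate $N_{n,r}(x)$. Ruling this out requires a tail estimate on $F_Y$ near zero (or an a priori truncation of the marks) combined with a careful matching of the scales produced by Theorem~\ref{teo:rspacing} in the Gumbel, Fr\'echet and Weibull regimes, so that only the top $O(1)$ spacings survive in the limit.
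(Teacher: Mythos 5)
The paper offers no proof of this corollary: it is stated as an immediate consequence of Theorem \ref{teo:rspacing}, and the way it is actually used in Section \ref{sub:covapp} is through the identity $\P(M_{k,n}^{(r)}<Y)=\int_0^1\P(M_{k,n}^{(r)}<s)\,dF_Y(s)$ for a \emph{single} random length $Y$ independent of the spacings. Your first, ``transparent'' case is therefore essentially the whole intended argument: condition on $Y=u$, identify $\{N_{n,r}(x)\le k\}$ with $\{M_{k+1,n}^{(r)}<xu\}$, apply Theorem \ref{teo:rspacing} pointwise in $u$ with the appropriate normalization, and pass to the limit by bounded convergence. Even there two frictions with the printed statement remain: the corollary carries no normalizing sequence, so it must be read with the $Y_i$ living on the scale $a_n$, $b_n$ or $B-b_n$ (otherwise the limit is degenerate); and your own reduction produces $H_{k+1,r}^{X}$ rather than the stated $H_{k,r}^{X}$ --- Section \ref{sub:covapp} uses $\P(N_{n,r}<k)=\P(M_{k,n}^{(r)}<Y)$, i.e.\ $N\le k-1$, so the corollary's ``$\le k$'' paired with ``$H_{k,r}$'' is off by one, and you inherit that mismatch without comment.

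The genuine gap is in your general case. If each spacing is tested against its own i.i.d.\ mark $Y_i$, as the definition $N_{n,r}(x)=\#\{i\mid S_{i,n}^{(r)}\ge xY_i\}$ literally says, then the marked-point-process limit you invoke gives $\P\bigl(\#\{j\mid \xi_j\ge xY_j'\}\le k\bigr)$ with the $Y_j'$ i.i.d.\ $F_Y$ and independent of the limit points $\xi_j$. This is \emph{not} of the form $\int_0^\infty H_{k,r}^{X}(ux)\,dF_Y(u)$, which is the law of the $k$-th maximum tested against one common mark: already for the event of no exceedances one compares $\P(\xi_j<xY_j'\ \mbox{for all}\ j)$ with $\P(\xi_j<xY'\ \mbox{for all}\ j)$, and these differ in general. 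So the continuous-mapping step cannot ``yield the announced integral formula''; either the corollary is to be read with a common $Y$ (in which case your general case is superfluous and your first case is the proof), or the announced formula does not hold for independent marks, and no amount of control over small marks attached to non-extremal spacings --- the obstacle you flag --- will repair that. You should commit to the common-$Y$ reading, which is the one consistent with the rest of the paper, and drop the marked-process argument.
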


\section{Simulations and numerical results}\label{sec:simul}


\subsection{The quality of approximations}\label{sub:simul}

To assess the approximations of the maximal $r$-spacings distribution,
we conducted a simulation study, using the R packages \cite{r11}.

The estimated CDF of $M_{1,n}^{(r)}$ and the empirical cumulative distribution function (ECDF) based on results of $6000$ replicates for
the uniform model with $n=10^4$, $10^5$ and $r=5$ are shown on the figure \ref{fig:fig01}.

\begin{figure}[ht!]
\centering
\includegraphics[height=140pt]{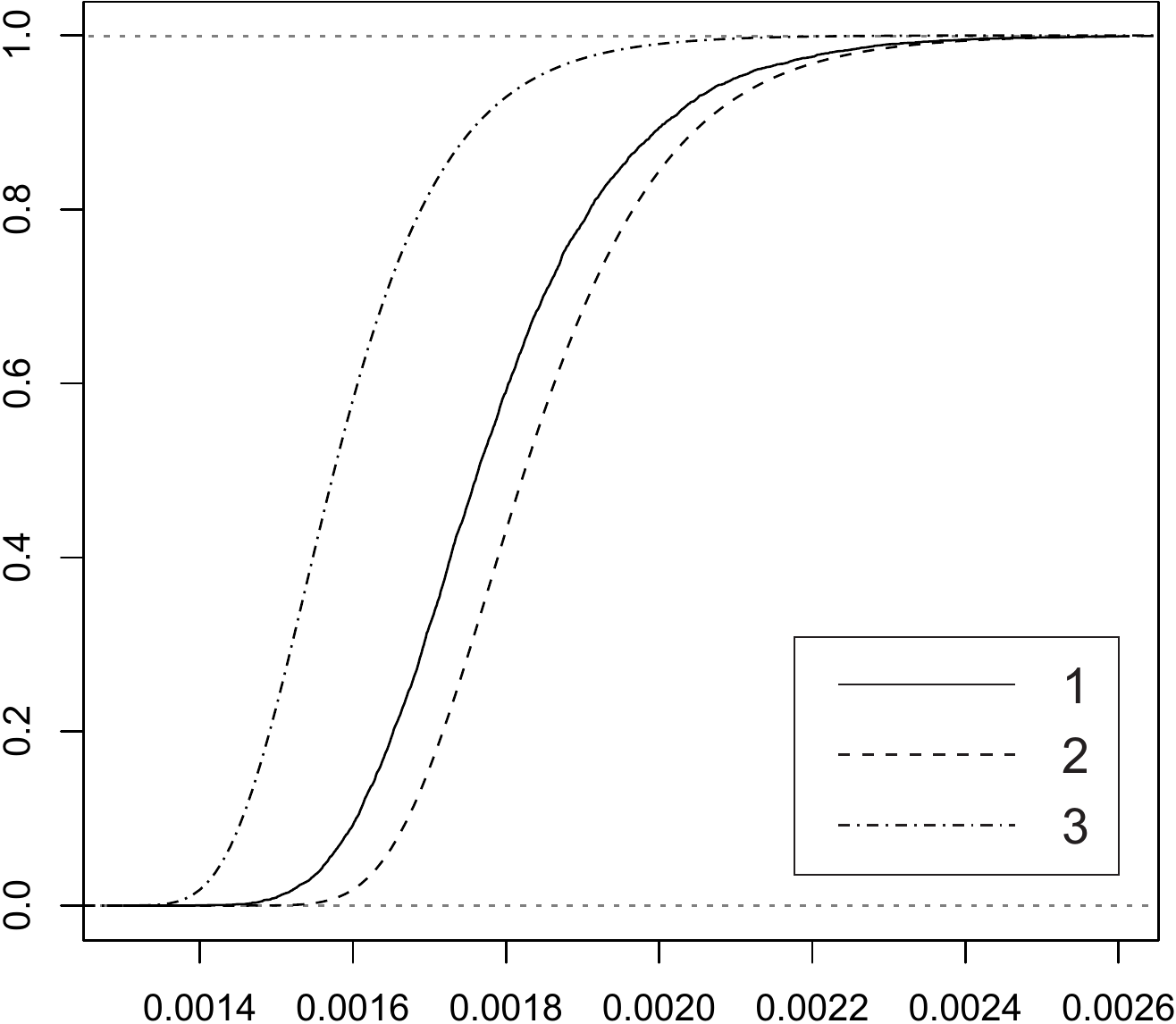}\qquad
\includegraphics[height=140pt]{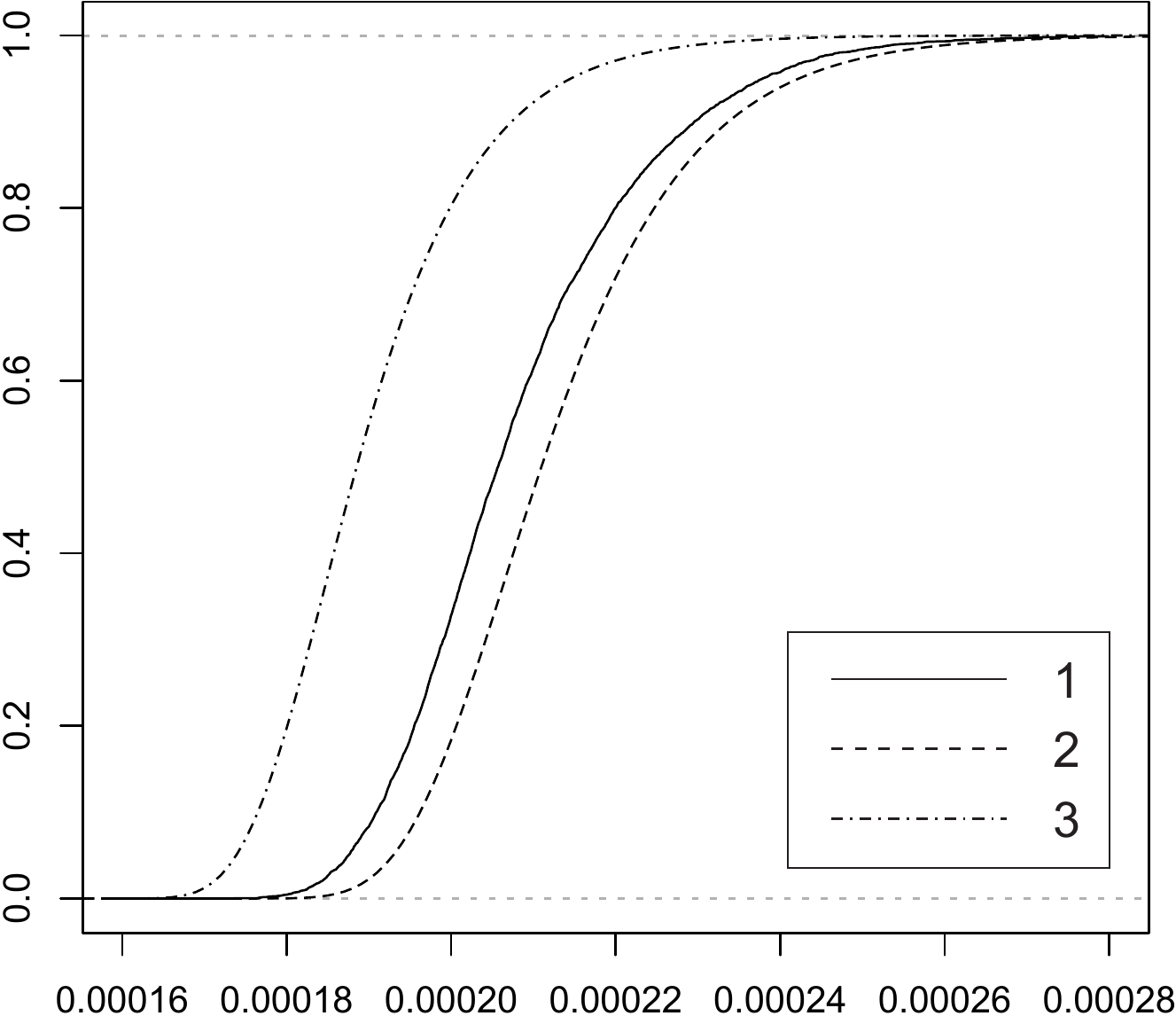}
\caption{\label{fig:fig01} \textit{Uniform spacings model: (1) ECDF; (2) CDF estimator  (\ref{equ:ldappr}); (3) CDF estimator  (\ref{equ:limmaver}) with
$b_n=\log n + (r - 1)\log \log n - \log \Gamma(r)$.}}
\end{figure}

It is important to note that on the figure \ref{fig:fig01}, the curve (2) on both
graphs is close to the empirical CDF of
$\widehat{M}_{1,n}^{(r)}$, so the last is not drawn.

The corresponding estimated quantiles of the distribution
$M_{1,n}^{(r)}$ under $r=5$ as well as the estimated quantiles under $r=1$ are
presented in tables \ref{tabl:tabl01} and \ref{tabl:tabl02}. Rows ``$M_{1,n}^{(r)}$"
and ``$\widehat M_{1,n}^{(r)}$" contain the empirical quantiles of
$M_{1,n}^{(r)}$ and $\widehat M_{1,n}^{(r)}$, $r=1,5$, based on 6000
replicates. The quantiles estimated by (\ref{equ:ldappr}) are contained in row
``$(\ref{equ:ldappr})$''. The estimator based on (\ref{equ:limmaver}) with
$b_n=\log n+(r-1)\log \log n - \log \Gamma(r)$ denoted by
``$\log$''.
\begin{table}[ht]
{\small\tabcolsep=2.5pt
\caption{\label{tabl:tabl01}}
\centerline{\textit{Quantiles for uniform spacings, $n =10^4$. All values must be multiplied by $10^{-3}$.\vspace{+2mm}}}
\centerline{\begin{tabular}{|l|l|l|l|l|l|l|l|l|l|l|}
\hline
& \multicolumn{2}{|c|}{$0.05$} & \multicolumn{2}{|c|}{$0.25$} & \multicolumn{2}{|c|}{$0.5$} & \multicolumn{2}{|c|}{$0.75$} & \multicolumn{2}{|c|}{$0.95$}\\
\hline
$r$&\multicolumn{1}{|c|}{1}&\multicolumn{1}{|c|}{5}&\multicolumn{1}{|c|}{1}&\multicolumn{1}{|c|}{5}& \multicolumn{1}{|c|}{1}&\multicolumn{1}{|c|}{5}&\multicolumn{1}{|c|}{1}&\multicolumn{1}{|c|}{5}&\multicolumn{1}{|c|}{1}&\multicolumn{1}{|c|}{5}\\
\hline
$M_{1,n}^{(r)}$ & 0.8136 & 1.5665 & 0.8912 & 1.669 & 0.9603 & 1.7617 & 1.0497 & 1.8801 & 1.2225 & 2.1021\\
\hline
$\widehat{M}_{1,n}^{(r)}$ & 0.8097 & 1.6376 & 0.8896 & 1.7347 & 0.9582 & 1.8198 & 1.0461 & 1.9322 & 1.2231 & 2.1455\\
\hline
(\ref{equ:ldappr}) & 0.8113 & 1.6377 & 0.8884 & 1.7367 & 0.9577 & 1.8245 & 1.0456 & 1.9345 & 1.2181 & 2.1462\\
\hline
$\log$ & 0.8113 & 1.4293 & 0.8884 & 1.5063 & 0.9577 & 1.5756 & 1.0456 & 1.6636 & 1.2181 & 1.836\\
\hline
\end{tabular}}}
\end{table}

\begin{table}[ht!]
{\small\tabcolsep=2.5pt
\caption{\label{tabl:tabl02}}
\centerline{\textit{Quantiles for uniform spacings, $n =10^5$. All values must be multiplied by $10^{-4}$.\vspace{+2mm}}}
\centerline{\begin{tabular}{|l|l|l|l|l|l|l|l|l|l|l|}
\hline
& \multicolumn{2}{|c|}{$0.05$} & \multicolumn{2}{|c|}{$0.25$} & \multicolumn{2}{|c|}{$0.5$} & \multicolumn{2}{|c|}{$0.75$} & \multicolumn{2}{|c|}{$0.95$}\\
\hline
$r$&\multicolumn{1}{|c|}{1}&\multicolumn{1}{|c|}{5}&\multicolumn{1}{|c|}{1}&\multicolumn{1}{|c|}{5}& \multicolumn{1}{|c|}{1}&\multicolumn{1}{|c|}{5}&\multicolumn{1}{|c|}{1}&\multicolumn{1}{|c|}{5}&\multicolumn{1}{|c|}{1}&\multicolumn{1}{|c|}{5}\\
\hline
$M_{1,n}^{(r)}$ & 1.0413 & 1.8759 & 1.1189 & 1.9708 & 1.1882 & 2.0563 & 1.2744 & 2.1682 & 1.4487 & 2.3803\\
\hline
$\widehat{M}_{1,n}^{(r)}$ & 1.0436 & 1.923 & 1.1181 & 2.0222 & 1.1888 & 2.1059 & 1.2805 & 2.2112 & 1.4508 & 2.4122\\
\hline
(\ref{equ:ldappr}) & 1.0416 & 1.9295 & 1.1186 & 2.0247 & 1.1879 & 2.1096 & 1.2759 & 2.2163 & 1.4483 & 2.4227\\
\hline
$\log$ & 1.0416 & 1.7387 & 1.1186 & 1.8157 & 1.1879 & 1.885 & 1.2759 & 1.973 & 1.4483 & 2.1454\\
\hline
\end{tabular}}}
\end{table}

In the case $r=1$ all of these results work well and all of the above
curves are close (which also can be seen from tables of
quantiles).

Estimators obtained due to the formula (\ref{equ:gform2}) for the truncated normal
distribution with parameters $\mu = 1/2$, $\sigma^2 = 1$, $a = 0$, $b=1$
(lower and upper bounds) are in agreement with the empirical data, which
can be seen on the table \ref{tabl:tabl03}. 

\begin{figure}[ht!]
\begin{center}\includegraphics[height=140pt]{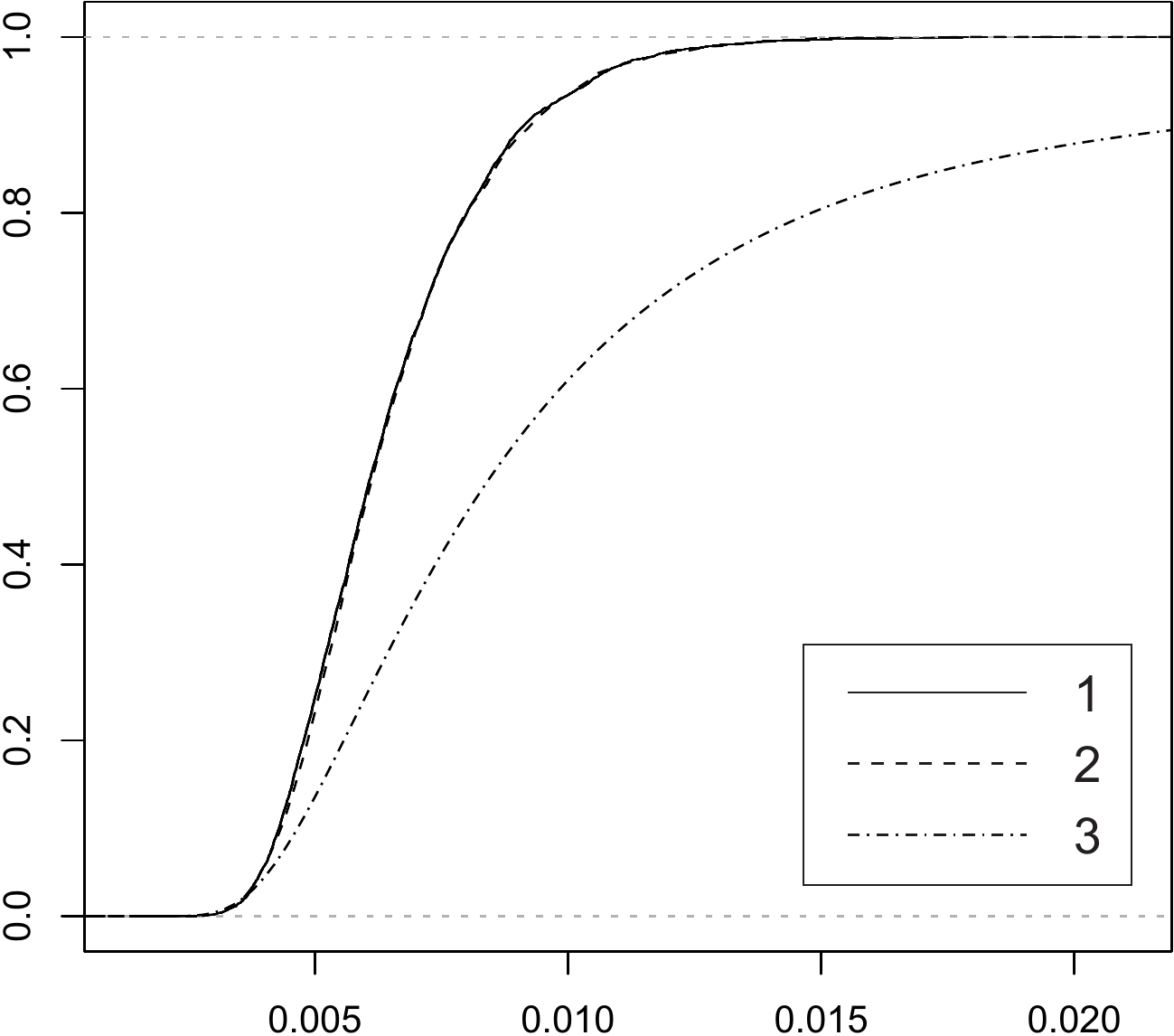}\end{center}
\caption{\label{fig:fig02}\textit{Spacings for the triangle distribution: (1) ECDF; (2) CDF estimator via Corollary \ref{cor:symmetry};(3) CDF estimator (\ref{equ:gform2}).}}
\end{figure}

\begin{table}[ht]
{\small
\caption{\label{tabl:tabl03}}
\centerline{\textit{Quantiles for spacings of the truncated normal distribution, $n =10^4$.}}
\centerline{\textit{All values must be multiplied by $10^{-3}$.\vspace{+2mm}}}
\tabcolsep=2.5pt
\centerline{\begin{tabular}{|l|l|l|l|l|l|l|l|l|l|l|}
\hline
& \multicolumn{2}{|c|}{$0.05$} & \multicolumn{2}{|c|}{$0.25$} & \multicolumn{2}{|c|}{$0.5$} & \multicolumn{2}{|c|}{$0.75$} & \multicolumn{2}{|c|}{$0.95$}\\
\hline
$r$&\multicolumn{1}{|c|}{1}&\multicolumn{1}{|c|}{5}&\multicolumn{1}{|c|}{1}&\multicolumn{1}{|c|}{5}& \multicolumn{1}{|c|}{1}&\multicolumn{1}{|c|}{5}&\multicolumn{1}{|c|}{1}&\multicolumn{1}{|c|}{5}&\multicolumn{1}{|c|}{1}&\multicolumn{1}{|c|}{5}\\
\hline
$M_{1,n}^{(r)}$ & 0.8197 & 1.5782 & 0.8932 & 1.6809 & 0.9646 & 1.7729 & 1.0517 & 1.8876 & 1.2281 & 2.1117\\
\hline
(\ref{equ:gform2}) & 0.8154 & 1.5769 & 0.8931 & 1.682 & 0.9624 & 1.7757 & 1.0515 & 1.8918 & 1.2286 & 2.1145\\
\hline
(\ref{equ:ldappr}) & 0.8149 & 1.6501 & 0.8928 & 1.7512 & 0.9629 & 1.841 & 1.052 & 1.9537 & 1.2271 & 2.1711\\
\hline
\end{tabular}}}
\end{table}

The first row ``$M_{1,n}^{(r)}$" in table \ref{tabl:tabl03} displays
the empirical quantiles of $M_{1,n}^{(r)}$, $r=1,5$, based on $6000$
replicates. Row ``(\ref{equ:gform2})'' was obtained by using formula
(\ref{equ:gform2}) via empirical distribution function of uniform spacings 
based on $10^4$ replicates and row ``(\ref{equ:ldappr})'' was obtained via
the same formula (\ref{equ:gform2}) but utilizing the asymptotic
(\ref{equ:ldappr}).

The estimator given by formula (\ref{equ:gform2}) for the
triangle distribution having the PDF (\ref{equ:trzd}) with $\kappa=1/2$ is
shown on the figure \ref{fig:fig02} (number of replicates is $6000$, $n=10^4$,
$r=1$) and the corresponding quantiles are given in table \ref{tabl:tabl04}. It
is clear that (\ref{equ:gform2}) is not applicable for this distribution.
Much satisfactory results can be obtained by using Corollary \ref{cor:symmetry} (see
(\ref{equ:trzs}) for the explicit formula). The corresponding curve on the
figure \ref{fig:fig02} is close to the ECDF curve and the
quantiles in table \ref{tabl:tabl04} display good approximation under $r=1$ and
$r=5$ with $n=10^4$.
\vspace{-5mm}

\begin{table}[ht!]
{\small
\caption{\label{tabl:tabl04}}
\centerline{\textit{Quantiles for spacings of the triangle distribution,
$n =10^4$.}}
\centerline{\textit{All values must be multiplied by $10^{-2}$.\vspace{2mm}}}
\tabcolsep=2.5pt
\centerline{
\begin{tabular}{|l|c|c|c|c|c|c|c|c|c|c|}
\hline
& \multicolumn{2}{|c|}{$0.05$} & \multicolumn{2}{|c|}{$0.25$} & \multicolumn{2}{|c|}{$0.5$} & \multicolumn{2}{|c|}{$0.75$} & \multicolumn{2}{|c|}{$0.95$}\\
\hline
$r$&\multicolumn{1}{|c|}{1}&\multicolumn{1}{|c|}{5}&\multicolumn{1}{|c|}{1}&
\multicolumn{1}{|c|}{5}&
\multicolumn{1}{|c|}{1}&\multicolumn{1}{|c|}{5}&\multicolumn{1}{|c|}{1}
&\multicolumn{1}{|c|}{5}&\multicolumn{1}{|c|}{1}&\multicolumn{1}{|c|}{5}\\
\hline
$M_{1,n}^{(r)}$ & 0.3918 & 0.9481 & 0.5027 & 1.1506 & 0.6125 & 1.331 & 0.753 &
1.5406 & 1.0466 & 1.8828 \\
\hline
C.\ref{cor:symmetry} & 0.3942 & 0.9467 & 0.5085 & 1.1508 & 0.6145 & 1.3406 & 0.7538 & 1.5496 &
1.0405
& 1.8902\\
\hline
(\ref{equ:gform2}) & 0.4085 & $-$ & 0.6005 & $-$ & 0.8474 & $-$ & 1.3039 & $-$
&
3.6956 & $-$\\
\hline
\end{tabular}}
}
\end{table}

\subsection{Coverage problems}
\label{sub:covapp}

Let $L$ be a length of single read in bp, $N$ be a length of genome, $I$ be a
minimal overlap of two random reads to be assembled in one sequence. As we see above, the
full genome $r$-times coverage problem can be transformed to the problem of
$r$-times coverage of the interval $[0,1]$ by random intervals of length
$l=(L-I)/N$ and
$$
\P(Q_r)=\P(M_{1,n}^{(r)}<l),
$$
where $Q_r$ denotes the following event: all existing reads contains at least $r$ bases from all
positions, taking into account only reads with minimal intersection $I$ bp with
at least $r$ neighbors.

The simplest case for applications in coverage problem is the uniformly distributed
random variables determining locations of the intervals considered in Section~\ref{sec:unif}.
All necessary results for uniform case were posed in Section~\ref{sec:unif}.
For example, to cover the whole human genome of length $3.2\times 10^9$ under
uniform distribution with reads of length $200$ and overlap $I=50$ with the 
probability not less then $95\%$ one needs to acquire at least $5\times 10^8$ random reads.

Coverage in practice is typically not uniform. Note that all distributions with
bounded support densities $0<\delta<p(x)$ for all $x\in [A,B]$ belong to
Weibull's extremal type with $a=1$. The asymptotic results for this case are
given in Section~\ref{sec:mixed}.

Results of Theorem \ref{teo:rspacing} with Corollary \ref{cor:symmetry} are
applicable for some symmetric distributions with $p(1_-)=p(0_+)=0$. For example,
the trapezoidal distribution having PDF
\begin{equation}
\label{equ:trzd}
p(x)=
\begin{cases}
(\kappa(1-\kappa))^{-1} x,\; x\in [0,\kappa);\cr
(1-\kappa)^{-1}, \; x\in [\kappa,(1-\kappa));\cr
(\kappa(1-\kappa))^{-1} (1-x),\; x\in [1-\kappa,1];\cr
0, \; x\notin [0,1], \cr
\end{cases}
\end{equation}
with some $\kappa\in [0,1/2]$ belongs to Weibull's extremal type
with $a=2$ (see \cite{llr83}, Theorem 1.6.1).
Then, by Corollary \ref{cor:symmetry} (iii),
\begin{equation}
\label{equ:trzs}
\P(M_{k,n}^{(r)}\leq l)\approx (H_{1,r}^{W}(l\sqrt{n}/\sqrt{2\kappa(1-\kappa)}\,))^2,
\end{equation}
where $H_{1,r}^{W}$ is the distribution function of 
$$\max_{j\geq 0}\biggl(\sum_{l=1}^r
\biggl(\biggl(\sum_{s=1}^{j+l} E_{s}\biggr)^{\!1/2}-\biggl(\sum_{s=1}^{j+l-1} E_{s}\biggr)^{\!1/2}\biggr)\biggr),$$
 where
$E_1,E_2,\ldots$ be a sequence of {\em i.i.d.} random variables having the standard
exponential $E(1)$ distribution. By Lemma \ref{lem:loc5} the same approximation
is valid under any PDF having the same behavior near bounds $0$ and $1$ and
separated from zero in other points of the interval $[0,1]$.
Corollary \ref{cor:symmetry} (i) will be used under
$
p(x)=e^{-1/x}
$
in some neighborhood of $0$ and
$
p(x)=e^{-1/(1-x)}
$
in some neighborhood of $1$.

The important extension of the coverage problem arises if we allow the
subsegments covering the initial interval to have a random lengths. Under the independence assumption of the
original sample of locations $X_1$, \ldots, $X_n$ and the corresponding sample of
covering small segments length $Y_1$, \ldots, $Y_n$
the $r$-times coverage probability can be obtained by $\P(M_{1,n}^{(r)}<Y)$,
where $Y$ has the distribution of random length of the small segments concentrated on
$[0,1]$ and independent of $M_{1,n}^{(r)}$.  Then the probability of $r$-times
coverage of whole interval $[0,1]$ is
\begin{equation}
\label{equ:randcor}
\P(M_{1,n}^{(r)}<Y)=\int_{0}^{1} \P(M_{1,n}^{(r)}<s) dF_{Y}(s),
\end{equation}
where $F_{Y}$ is the distribution function of $Y$. Applying the approximate
formulas for the distribution function of $M_{1,n}^{(r)}$ we obtain the required
probability.  Analogously, the probability to have less then $k$ regions without
$r$-times coverage is given by
$$
\P(N_{n,r}<k)=\P(M_{k,n}^{(r)}<Y)=\int_{0}^{1} \P(M_{k,n}^{(r)}<s) dF_{Y}(s)
$$
and asymptotic formulas for $\P(M_{k,n}^{(r)}<s)$ are obtained in previous sections.

Based on the approximation formulas for uniform and nonuniform spacings we fulfill table
\ref{tabl:tabl05} containing total (or expected under random length of reads)
length of reads necessary to $r$-times full coverage of Human's genome  with
probability at least $0.95$ and under $I=50$.
In columns ``Random'' we list numerical results for the normal case $N(300,50^2)$.
Remark that the results presented in table \ref{tabl:tabl05} can't be obtained
and even verified by simulations in closed time because of too much length of
Human's genome.

\begin{table}[ht!]
{\small
\caption{\label{tabl:tabl05}}
\centerline{\textit{Total (expected) length of reads / whole genome length required for full }}
\centerline{\textit{$r$-coverage of Human's genome with 95\% probability under $I=50$.\vspace{+2mm}}}
\centerline{\tabcolsep=2pt
\begin{tabular}{|l|c|c|c|c|c|c|c|c|c|c|c|c|}
\hline
Distribution &\multicolumn{4}{c|}{Uniform} & \multicolumn{4}{c|}{Truncated $N(1/2,1)$} & \multicolumn{4}{c|}{Truncated $N(1/2,1/4)$}  \\
\hline
$L$ & $100$ & $200$ & $300$ & Random & $100$ & $200$ & $300$ & Random & $100$ & $200$ & $300$ & Random \\
\hline
$r=1$ & 48 & 31 & 27 & 27 & 49 & 31 & 27 & 27 & 173 & 109 &  95 &  95 \\
\hline
$r=2$ & 55 & 35 & 31 & 31 & 56 & 36 & 32 & 32 & 201 & 127 & 112 & 112 \\
\hline
$r=5$ & 72 & 46 & 41 & 41 & 73 & 47 & 42 & 42 & 268 & 171 & 151 & 151 \\
\hline
$r=10$ & 94 & 61 & 54 & 54 &  96 &  62 &  55 &  55 & 359 & 231 & 204 & 204 \\
\hline
$r=50$ & 227 & 149 & 133 & 133 & 237 & 155 & 138 & 138 & 916 & 599 & 534 & 534 \\
\hline
\end{tabular}}
}
\end{table}

The estimator (\ref{equ:ldappr}) is used to fulfill the table for the uniform
case. The estimators (\ref{equ:ldappr}) and (\ref{equ:gform2}) are used to
fulfill the table for the truncated normal cases. The integration
(\ref{equ:randcor}) is applied in the case of random length of reads with
$Y\sim N(300,50^2)$ and the same formulas (\ref{equ:ldappr}) and 
(\ref{equ:gform2}) are used to evaluate $\P(M_{1,n}^{(r)}<s)$ under the integral. Remark the total correspondence of
results for random and non-random length of reads with the same expected
values.


\section{Proofs}
\label{sec:proofs}

The following proposition were addressed in Section~\ref{sec:unif}.

\begin{proposition}
\label{pro:inda}
Let $E_1$, $E_2$, \ldots, be a sequence of {\em i.i.d.} random variables having the standard exponential distribution $E(1)$, $S_j^{(r)} = \sum\limits_{i=j}^{j+r-1} E_i$, $j=1, 2, \ldots$ having 
$\Gamma(r, 1)$ distribution. Set $Q_j = \{S_j^{(r)} < x\}$. Then,
\begin{equation}
\label{equ:p1}
\P(Q_1,\ldots,Q_n)\geq \P(Q_1)\cdots \P(Q_n)
\end{equation}
for all $n\in\mathbb{N}$.
\end{proposition}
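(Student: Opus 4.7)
The plan is to recognize Proposition \ref{pro:inda} as a positive association statement and derive it from Harris's inequality (the FKG inequality for product measures). The key observation is that each indicator $\mathbf{1}_{Q_j} = \mathbf{1}\{S_j^{(r)} < x\}$ is a bounded non-increasing function of $(E_1, E_2, \ldots)$ with respect to the coordinatewise partial order on $\mathbb{R}_+^{\mathbb{N}}$: it depends only on the block $E_j, \ldots, E_{j+r-1}$, and raising any coordinate in that block can only increase $S_j^{(r)}$, which can flip the indicator from $1$ to $0$ but never from $0$ to $1$. Coordinates outside the block leave the indicator unchanged, so monotonicity holds trivially there as well.

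Because $E_1, E_2, \ldots$ are i.i.d., their joint law is a product measure. Harris's inequality then guarantees that for any two bounded measurable functions $f, g$ that are simultaneously non-increasing (or simultaneously non-decreasing) in each coordinate, one has $\E[fg] \geq \E[f]\,\E[g]$. I would quote this as the single nontrivial input and note that it follows from the standard FKG argument on a product measure, since the lattice condition is automatic in the product case.

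The $n$-fold version needed here is obtained by a short induction on $n$. Writing $\prod_{j=1}^{n} \mathbf{1}_{Q_j} = \bigl(\prod_{j=1}^{n-1} \mathbf{1}_{Q_j}\bigr) \cdot \mathbf{1}_{Q_n}$, both factors are non-increasing in each $E_i$ (a finite product of $[0,1]$-valued non-increasing functions is again non-increasing), so the two-function Harris inequality yields
$$
\E\Bigl[\prod_{j=1}^{n} \mathbf{1}_{Q_j}\Bigr] \geq \E\Bigl[\prod_{j=1}^{n-1} \mathbf{1}_{Q_j}\Bigr]\cdot \E[\mathbf{1}_{Q_n}],
$$
and the induction hypothesis finishes the job, giving
$$
\P(Q_1,\ldots,Q_n) = \E\Bigl[\prod_{j=1}^{n} \mathbf{1}_{Q_j}\Bigr] \geq \prod_{j=1}^{n} \P(Q_j).
$$

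I do not expect a genuine obstacle here; the only two points requiring care are (i) spelling out the coordinatewise monotonicity of each $\mathbf{1}_{Q_j}$, including the coordinates on which it does not depend, and (ii) justifying the passage from the two-function Harris inequality to its $n$-fold form. A fully self-contained alternative, if one prefers to avoid invoking FKG, would be a direct induction: condition on $(E_1,\ldots,E_{n+r-2})$ on the event $Q_1 \cap \cdots \cap Q_{n-1}$, observe that this conditioning stochastically decreases $E_n,\ldots,E_{n+r-2}$ relative to their unconditional law (since the events $Q_j$ are all of the form $\{\text{sums are small}\}$), and combine this with the independence of $E_{n+r-1}$ to conclude $\P(Q_n \mid Q_1,\ldots,Q_{n-1}) \geq \P(Q_n)$; this is essentially reproving Harris in the specific case at hand.
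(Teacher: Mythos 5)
Your proof is correct, but it takes a genuinely different route from the paper. You invoke Harris's inequality (FKG for product measures): each indicator $\mathbf{1}_{Q_j}$ is a coordinatewise non-increasing function of the independent variables $E_1,E_2,\ldots$, a finite product of $[0,1]$-valued non-increasing functions is again non-increasing, and the two-function inequality plus induction on $n$ gives $\E\bigl[\prod_j \mathbf{1}_{Q_j}\bigr]\geq\prod_j\P(Q_j)$. This is sound; the only inputs are the monotonicity check (which you spell out, including the inactive coordinates) and the standard Harris inequality, which indeed needs no lattice verification for a product measure. The paper instead argues by hand: it reduces the claim to showing that $f(y)=\P(Q_1,\ldots,Q_{k-1}\mid S_k^{(r)}=y)$ is non-increasing in $y$, disintegrates over the intermediate partial sums $S_k^{(1)},\ldots,S_k^{(r-1)}$ (whose conditional law given $S_k^{(r)}=y$ is uniform on the ordered simplex $D_r(y)$), observes that the conditional probability is decreasing in each of these partial sums, and concludes by a scaling argument that the average over $D_r(y)$ decreases in $y$; chaining these conditional inequalities then yields the product bound. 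Your note at the end about a self-contained conditioning argument is in fact close in spirit to what the paper does. The trade-off: your argument is shorter, cites a single classical tool, and generalizes immediately to any finite family of events that are monotone in independent inputs; the paper's argument is elementary and self-contained but is essentially a bespoke re-derivation of the Harris inequality in this special case, at the cost of an explicit computation of the conditional density on the simplex.
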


\begin{proof} To prove this inequality it is sufficient to prove that
$$
\P(Q_1,\ldots,Q_{k-1} \mid S_k^{(r)}<y)\geq \P(Q_1,\ldots,Q_{k-1})
$$
for all $k$ and $y$. Remark that
$$
\P(Q_1,\ldots,Q_{k-1}\mid S_k^{(r)}<\infty)=\P(Q_1,\ldots,Q_{k-1})
$$ so if we will be able to prove that
$$f(y)=\P(Q_1,\ldots,Q_{k-1}\mid S_k^{(r)}=y)$$
is a monotone decreasing function of $y$ we will obtain (\ref{equ:p1}).

Set $D_r(y)=\{(z_1,\ldots,z_{r-1})\in\Rb^{n-1} \mid 0<z_1<\ldots<z_{r-1}<y\}$. Then,
$$
f(y) = \int_{D_r(y)}\P(Q_1,\ldots,Q_{k-1} \mid S_k^{(1)} = z_1, \ldots, S_k^{(r-1)} = z_{r-1}, S_k^{(r)}=y)\times {}
$$
$$
{}\times p_{S_k^{(1)}, \ldots, S_k^{(r-1)} \mid S_k^{(r)} = y}(z_1, \ldots,z_{r-1})\,dz_1\,\ldots\,dz_{r-1},
$$
where the conditional PDF $p_{S_k^{(1)}, \ldots, S_k^{(r-1)} \mid S_k^{(r)} = y}$ is given by
$$
p_{S_k^{(1)}\!\!\!, \ldots, S_k^{(r-1)} \mid S_k^{(r)} = y}(z_1, \ldots,z_{r-1}) = \frac{p_{S_k^{(1)}\!\!\!, \ldots, S_k^{(r-1)}\!\!\!, S_k^{(r)}}(z_1, \ldots, z_{r-1}, y)}{p_{S_k^{(r)}}(y)}
$$
$$
\!\!\!\!\!\!\!\!=
\frac{p_{E_{k}, \ldots, E_{k+r-1}}(z_1, z_2 - z_1, \ldots, z_{r-1} -z_{r-2}, y - z_{r-1})}{ p_{S_k^{(r)}}(y)}
$$
$$
= \frac{e^{-y}}{p_{S_k^{(r)}}(y)}\,\indi_{\{0<z_1<\ldots<z_{r-1}<y\}} =  \frac{\Gamma(r)}{y^{r-1}}\,\indi_{\{0<z_1<\ldots<z_{r-1}<y\}}.
$$
We see that
$$
\P(Q_1,\ldots,Q_{k-1}\mid S_k^{(1)} = z_1, \ldots, S_k^{(r-1)} = z_{r-1}, S_k^{(r)}=y)
$$
$$
= \P(S_1^{(r)} < x, \ldots, S_{k-r}^{(r)} < x, S_{k - r + 1}^{(r - 1)} < x - z_1, \ldots, S_{k-1}^{(1)} < x - z_{r-1})
$$
$$
= g(z_1, \ldots, z_{r-1})
$$
is monotone decreasing function of every $z_i$, $i = 1, \ldots, r-1$, therefore
\begin{eqnarray*}
f(y)&=& \frac{\Gamma(r)}{y^{r-1}}\; \int_{D_r(y)} g(z_1, \ldots, z_{r-1})\,dz_1\,\ldots\,dz_{r-1} \\
&=& \int_{D_r(y)} g(z_1, \ldots, z_{r-1})\,dz_1\,\ldots\,dz_{r-1}\Bigm/
\int_{D_r(y)}dz_1\,\ldots\,dz_{r-1}
\end{eqnarray*}
is monotone decreasing function of $y$. The proposition is proved.
\end{proof}

To prove the Theorem \ref{teo:mix3} we need several ancillary results.

\begin{lemma}
\label{lem:loc2}
Let $U_1,\ldots,U_n$ the a sample from the standard uniform distribution $U(0,1)$;
$S_{i,n}^{(r)}$ be the corresponding uniform $r$-spacings and $M_{k,n}^{(r)}$
be the $k$-th maximal $r$-spacings; $\sigma_{k,n}^{(r)}$ is such that
$M_{k,n}^{(r)}=U_{\sigma_{k,n}^{(r)}+r,n}-U_{\sigma_{k,n}^{(r)},n}$. Then,
$$
\P\Bigl(a\in [U_{\sigma_{k,n}^{(r)},n},U_{\sigma_{k,n}^{(r)}+r,n}]\Bigr)\to 0
\quad\mbox{as}\quad n\to\infty.
$$
\end{lemma}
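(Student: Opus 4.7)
The plan is to exploit the heuristic that any $r$-spacing containing a fixed interior point $a\in (0,1)$ has length of order $1/n$, whereas $M_{k,n}^{(r)}$ has length of order $\log n/n$; consequently the $k$-th maximal $r$-spacing can only rarely be the one that covers $a$.

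First, I would introduce $N_n=\#\{i\leq n\mid U_i\leq a\}$ and observe that any $r$-spacing $[U_{j,n},U_{j+r,n}]$ containing $a$ must satisfy $j\in \{N_n-r+1,\ldots,N_n\}$, hence is contained in the single interval $[U_{N_n-r+1,n},U_{N_n+r,n}]$; call its length $B_n$. On the event $\{a\in [U_{\sigma_{k,n}^{(r)},n},U_{\sigma_{k,n}^{(r)}+r,n}]\}$ one has $M_{k,n}^{(r)}\leq B_n$, so it suffices to show
$$
\P(M_{k,n}^{(r)}\leq B_n)\to 0.
$$
The events $\{N_n<r\}$ and $\{N_n>n-r\}$, on which the definition of $B_n$ needs a boundary convention, have exponentially small probability for $a\in(0,1)$ and can be ignored.

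Next, I would establish the tightness $nB_n=O_{\P}(1)$. Using the standard exponential representation $U_{j,n}\stackrel{d}{=}S_j/S_{n+1}$ recalled in Section~\ref{sec:unif}, one has
$$
B_n\stackrel{d}{=}\frac{1}{S_{n+1}}\sum_{i=N_n-r+2}^{N_n+r}E_i,
$$
a sum of $2r-1$ i.i.d.\ standard exponentials divided by $S_{n+1}$. Since $N_n/n\to a$ a.s.\ by the SLLN and $n/S_{n+1}\to 1$ a.s., a routine conditioning on $N_n$ shows that $nB_n$ converges in distribution to a $\Gamma(2r-1,1)$ random variable; in particular $nB_n=O_{\P}(1)$.

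Finally, the Poisson approximation underlying (\ref{equ:limmaver}) yields, for each fixed $k$, a non-degenerate limit for $nM_{k,n}^{(r)}-b_n^{(r)}$, so $nM_{k,n}^{(r)}\to\infty$ in probability because $b_n^{(r)}=\log n+(r-1)\log\log n-\log\Gamma(r)\to\infty$. Splitting for any $K>0$,
$$
\P(M_{k,n}^{(r)}\leq B_n)\leq \P(nM_{k,n}^{(r)}\leq K)+\P(nB_n>K),
$$
and letting $n\to\infty$ and then $K\to\infty$ completes the argument. The main obstacle is the tightness $nB_n=O_{\P}(1)$: the random index $N_n$ creates a dependence between the two sides of the ratio, but the concentration $N_n\sim na$ reduces this to a routine conditioning step in the exponential representation.
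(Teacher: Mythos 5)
Your argument is correct, and it runs on a genuinely different mechanism than the paper's. The paper's proof is a \emph{location} argument: by exchangeability of the uniform spacings, $\P(\sigma_{k,n}^{(r)}=s)=1/(n-r+2)$ for every index $s$, and by convergence of sample quantiles the indices of spacings that could cover $a$ lie, with probability tending to one, in a block of roughly $4\delta n$ consecutive positions; hence the probability in question is below $5\delta$ for large $n$ and every fixed $\delta>0$. Yours is a \emph{length} argument: the spacing straddling $a$ has length $O_{\P}(1/n)$, while $nM_{k,n}^{(r)}\to\infty$ in probability, so the straddling spacing is almost never long enough to be the $k$-th maximum. Your route buys an explicit rate of convergence (optimize over $K$ in your final display) and does not use exchangeability, so it transfers with little change to the step-density setting of Lemma~\ref{lem:loc3}; the paper's route needs no extreme-value input at all, only the symmetry of the spacing vector and quantile consistency.

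Two details should be tightened, though neither is fatal. First, the summands $E_{N_n-r+2},\ldots,E_{N_n+r}$ are not $2r-1$ i.i.d.\ exponentials: $N_n$ is determined by $S_{N_n}\le aS_{n+1}<S_{N_n+1}$, so the straddling increment $E_{N_n+1}$ is size-biased (inspection paradox) and the limit law of $nB_n$ is not $\Gamma(2r-1,1)$. Only tightness is needed, and it is cleaner to get it directly: $nB_n>K$ forces fewer than $r$ sample points in $(a,a+K/(2n))$ or fewer than $r$ in $(a-K/(2n),a)$, and each such count is Binomial$(n,K/(2n))$, converging to Poisson$(K/2)$; hence $\limsup_n\P(nB_n>K)\le 2\,\P(\mbox{Poisson}(K/2)<r)\to 0$ as $K\to\infty$. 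Second, (\ref{equ:limmaver}) covers only $k=1$; for $k\ge 2$ you should either invoke the standard $k$-th-maximum extension of the extremal limit theorem (as in \cite{dkr92}), or use the elementary bound $M_{k,n}^{(r)}\ge M_{kr,n}^{(1)}$ (each $r$-spacing contains at most $r$ of the $kr$ largest $1$-spacings, so at least $k$ distinct $r$-spacings exceed $M_{kr,n}^{(1)}$) together with the classical fact that $nM_{kr,n}^{(1)}-\log n=O_{\P}(1)$. With these repairs the proof is complete.
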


\begin{proof}
The exchangeable property of the uniform spacings (see e.g. \cite{pyk65}) implies the exchangeable property of the uniform $r$-spacings and, therefore,
$\P(\sigma_{k,n}^{(r)}=s)=1/(n-r+2)$, $s=1,\ldots,n-r+2$.
Let $\delta>0$, $a_{\delta_-}=a-\delta$, $a_{\delta_+}=a+\delta$ and
$A_{\delta}=[0,a_{\delta-}]\cup [a_{\delta+},1]$. By convergence property (in
probability and almost sure) of sample quantiles $U_{[np]+1,n}\to p$ as $n\to\infty$ for all $p\in
(0,1)$,
$$
\P(U_{[n(a_{\delta-}-\delta)]-r,n}\leq
a_{\delta-},U_{[n(a_{\delta+}+\delta)]+1,n}
\geq a_{\delta+} )\to 1 \quad\mbox{as}\quad n\to\infty.
$$
On the other hand,
\begin{eqnarray*}
\P\Bigl(\sigma_{k,n}^{(r)}\in \Bigl[[n(a_{\delta-}&-&\delta)]-r+1,[n(a_{\delta+}+
\delta)]\Bigr]\Bigr)\\
&=&
\frac{[n(a_{\delta+}+\delta)]-[n(a_{\delta-}-\delta)]+r}{n-r+2}\to 4\delta
\quad\mbox{as}\quad n\to\infty.
\end{eqnarray*}
Then,\vspace{-3mm}
$$
\P\Bigl(a\in [U_{\sigma_{k,n}^{(r)}},U_{\sigma_{k,n}^{(r)}+r}]\Bigr)<5\delta
$$
for any fixed $\delta>0$ and sufficienlty large $n$.
\end{proof}

Let $A=x_0<\ldots<x_{m+1}=B$, $J_i=[x_{i},x_{i+1}[$, $i=1,\ldots,n-1$, $J_m=[x_{m},x_{m+1}]$, and $c_i$ are positive constants, $i=0,\ldots,m$.
The following lemma is an extension of Lemma \ref{lem:loc2}.

\begin{lemma}
\label{lem:loc3}
Let $X_1,\ldots,X_n$ be a sample from an absolutely continuous distribution
having PDF $p$:
\begin{equation}
\label{equ:stden}
p(x)=
\begin{cases}
c_i, \quad\mbox{for}\quad x\in J_i, \; i=0,\ldots,m;\cr
0,\quad\mbox{otherwise};\cr
\end{cases}
\end{equation}
$M_{k,n}^{(r)}$ be the $k$-th maximal $r$-spacing and $\sigma_{k,n}^{(r)}$ is
such that $M_{k,n}^{(r)}=X_{\sigma_{k,n}^{(r)}+r,n}-X_{\sigma_{k,n}^{(r)},n}$.
Then,\vspace{-3mm}
$$
\P\Bigl(x\in [X_{\sigma_{k,n}^{(r)},n},X_{\sigma_{k,n}^{(r)}+r,n}]\Bigr)\to 0
\quad\mbox{as}\quad n\to\infty
$$
for any fixed $x\in [A,B]$.
\end{lemma}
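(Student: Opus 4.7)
The plan is to rewrite the event $\{x\in[X_{\sigma^{(r)}_{k,n},n},X_{\sigma^{(r)}_{k,n}+r,n}]\}$ as a statement about the unique $r$-spacing that \emph{contains} the fixed point $x$, and then to compare its (small) typical size against the (large) typical size of $M^{(r)}_{k,n}$. Almost surely $x$ is not a sample value, so there is a unique index $i^{*}_n:=\#\{i\leq n:X_i\leq x\}$ with $X_{i^{*}_n,n}\leq x<X_{i^{*}_n+1,n}$. The event under study then reduces to $\{\sigma^{(r)}_{k,n}=i^{*}_n\}$, or equivalently $\{S^{(r)}_{i^{*}_n,n}\geq M^{(r)}_{k,n}\}$: the $r$-spacing straddling $x$ must reach the $k$-th global maximum.

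Next I would establish a lower bound for $M^{(r)}_{k,n}$ and an upper bound for $S^{(r)}_{i^{*}_n,n}$. For the former, Theorem~\ref{teo:mix3} (or the Chen--Stein-type estimate of \cite{dkr92}) gives $n c_{\min}M^{(r)}_{k,n}/\log n\to 1$ in probability, so that $\P(nM^{(r)}_{k,n}\leq b_n)\to 0$ for any $b_n\to\infty$ with $b_n=o(\log n)$. For the latter, I would apply the probability integral transform $U_i=F(X_i)$ and use that $F$ is affine with slope $c_{j_0}$ on the piece $J_{j_0}\ni x$: on the high-probability event that both $U_{i^{*}_n,n}$ and $U_{i^{*}_n+r,n}$ stay inside $F(J_{j_0})$, one has $S^{(r)}_{i^{*}_n,n}=(U_{i^{*}_n+r,n}-U_{i^{*}_n,n})/c_{j_0}$. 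The classical Poisson approximation for uniform order statistics about a fixed interior level $u=F(x)$ yields $n(U_{i^{*}_n+r,n}-U_{i^{*}_n,n})\Rightarrow\Gamma(r,1)$, hence $nS^{(r)}_{i^{*}_n,n}=O_p(1)$.

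Combining through
$$
\P\bigl(x\in[X_{\sigma^{(r)}_{k,n},n},X_{\sigma^{(r)}_{k,n}+r,n}]\bigr)\leq \P\bigl(nS^{(r)}_{i^{*}_n,n}>b_n\bigr)+\P\bigl(nM^{(r)}_{k,n}\leq b_n\bigr)
$$
with, say, $b_n=\sqrt{\log n}$, both terms vanish by the two bounds just obtained. The boundary case $x=x_{j_0}$ for some $j_0$ is handled identically after replacing $c_{j_0}$ by $\min(c_{j_0-1},c_{j_0})$, since an $r$-spacing straddling a density discontinuity is still of order $1/n$. The step I expect to be the main obstacle is the identification of the distributional order of $S^{(r)}_{i^{*}_n,n}$: although intuitively the $r$-spacing anchored at a fixed interior point is of order $1/n$, making this precise with the \emph{random} anchoring index $i^{*}_n$ requires the uniform reduction above (or, alternatively, a conditional exchangeability argument within $J_{j_0}$ in the spirit of the proof of Lemma~\ref{lem:loc2}).
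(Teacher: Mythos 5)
Your argument is correct in outline but follows a genuinely different route from the paper's. The paper never compares orders of magnitude at all: it exploits the translation invariance of the law of the location of the extremal $r$-spacing \emph{within the minimal-density piece} $J_{\tilde i}$ (the analogue of the exchangeability used in Lemma~\ref{lem:loc2}), packs $s$ disjoint $\delta$-neighbourhoods into $J_{\tilde i}$ so that each carries probability at most $1/s$, and then transfers this bound to an arbitrary point of $[A,B]$ via the comparison inequality $c_{\min}(x_2-x_1)=F(x_2)-F(x_1)\leq d\,(F(x_4)-F(x_3))$. That soft pigeonhole argument needs no information about the size of $M^{(r)}_{k,n}$. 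You instead localize the event to the $r$-spacings straddling the fixed point $x$ and beat them against the $k$-th maximum: the straddling spacing is $O_p(1/n)$ while $M^{(r)}_{k,n}$ is of exact order $\log n/n$. Your route is more quantitative (it would give a rate), at the price of needing a lower bound on $M^{(r)}_{k,n}$; the paper's route is softer and self-contained.

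Two caveats. First, as written your lower bound on $M^{(r)}_{k,n}$ cites Theorem~\ref{teo:mix3}, which is circular: Lemma~\ref{lem:loc3} is an ingredient of the paper's proof of that theorem. The fix is easy and you essentially name it yourself: either invoke the external estimate of \cite{dkr92}, or observe that $M^{(r)}_{k,n}$ dominates the $k$-th maximal $r$-spacing among the points falling in a single piece $J_i$, which conditionally on their number are rescaled uniforms, so the uniform results of Section~\ref{sec:unif} already give $\P(nM^{(r)}_{k,n}\leq b_n)\to 0$ for $b_n=o(\log n)$. Second, two small inaccuracies that do not affect the conclusion: the event is $\sigma^{(r)}_{k,n}\in\{i^*_n-r+1,\ldots,i^*_n\}$ rather than $\sigma^{(r)}_{k,n}=i^*_n$ (a union of $r$ events, each handled identically), and the straddling $r$-spacing is size-biased, so its rescaled limit is not $\Gamma(r,1)$ but a stochastically larger law; only the $O_p(1/n)$ order matters, and that is correct.
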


\begin{proof}
Let $c_{\min}=\min(c_0,\ldots,c_m)$, and $\tilde i$ is the corresponding index, $d=$ $\max(c_0,\ldots,c_m)/c_{\min}$ and $\delta>0$ is a small positive number.
Remark that $c_{\min}(x_2-x_1)=F(x_2)-F(x_1)\leq d (F(x_4)-F(x_3))$ for any $x_1,x_2\in J_{\tilde i}$ and $x_3,x_4\in [A,B]$ such that $x_2-x_1=x_4-x_3>0$. Therefore,
\begin{eqnarray}
\label{equ:symmetry}
\P\Bigl([X_{\sigma_{k,n}^{(r)},n},X_{\sigma_{k,n}^{(r)}+r,n}]&\subseteq & V_x(\delta)\Bigr)=
\P\Bigl([X_{\sigma_{k,n}^{(r)},n},X_{\sigma_{k,n}^{(r)}+r,n}]\subseteq V_y(\delta)\Bigr)
\\
&\geq &
\P\Bigl([X_{\sigma_{k,n}^{(r)},n},X_{\sigma_{k,n}^{(r)}+r,n}]\subseteq V_z(\delta/d)\Bigr),
\nonumber
\end{eqnarray}
for all $x,y$ such that $V_x(\delta)\cup V_y(\delta)\subseteq J_{\tilde i}$ and $z$ so that $V_z(\delta/d)\subseteq [A,B]$, where $V_u(\delta)$ is the $\delta$-neighbourhood of $u\in [A,B]$. Let $\delta>0$ and
$x_{\tilde i}+\delta\leq y_1<\ldots<y_s\leq x_{\tilde i+1}-\delta$ be such that $y_{i+1}-y_{i}>2\delta$. Then, $V_{y_i}(\delta)\cap V_{y_j}(\delta)=\emptyset$ for all $i\not= j$ and by (\ref{equ:symmetry}),
$$
\sum_{i=1}^k \P\Bigl([X_{\sigma_{k,n}^{(r)},n},X_{\sigma_{k,n}^{(r)}+r,n}]\!\subseteq\! V_{y_i}(\delta)\Bigr)\!=\!s\, \P\Bigl([X_{\sigma_{k,n}^{(r)},n},X_{\sigma_{k,n}^{(r)}+r,n}]\!\subseteq\! V_{y_1}(\delta)\Bigr) \!\leq\! 1.
$$
Choosing $s\to\infty$ as $\delta\to 0$ and using (\ref{equ:symmetry}), we obtain that
\begin{equation}
\label{equ:c1}
\P\Bigl([X_{\sigma_{k,n}^{(r)},n},X_{\sigma_{k,n}^{(r)}+r,n}]\subseteq V_y(\delta)\Bigr)\to 0
\end{equation}
as $\delta\to 0$ for all $y\in (x_{\tilde i},x_{\tilde i+1})$. The result follows immediately.
\end{proof}

Now we are proceeding to prove Theorem \ref{teo:mix3}.

\begin{proof}[Proof of Theorem \ref{teo:mix3}]
Represent the original distribution as
a mixture of uniforms $\sum_{i=0}^m \theta_i U(x_i,x_{i+1})$ singular with each other, where
$\theta_i=c_i\alpha_i$, $\alpha_i=x_{i+1}-x_i$, $i=0,\ldots,m$. By
(\ref{equ:apprf}),
$$
\P(k M_{i,1,k}^{(r)}<x)=\exp(-k f_{n,r}(x/\alpha_i))+o(1),
$$
with $o(1)\to 0$ as $k\to\infty$ uniformly in $x\geq 0$, where $M_{i,1,k}$
are the maximal $r$-spacings based on the sample form $U(x_i,x_{i+1})$,
$i=0,\ldots,m$.

Let $D_i=\{j \mid x_i<X_{j,n}<X_{j+r,n}<x_{j+1}\} $ and $D=\bigcup\nolimits_{i=0}^m D_i$.
Introduce restricted maximal spacings $\breve M_{i,1,n}^{(r)}=\max\{S_{j,n},
j\in D_i; X_{l_i,n}-x_i; x_{i+1}-X_{u_i,n}\}$, where $l_i$ and $u_i$ are the
minimal and the maximal elements of $D_i$ respectively, and $\breve
M_{1,n}^{(r)}=\max(\breve M_{0,1,n}^{(r)},\ldots,\breve M_{m,1,n}^{(r)})$.
Remark that the distribution of the restricted maximal $r$-spacing conditionally
on $Q_{D}=\{\# D_i=n_i,\:i=0,\ldots,m\}$ is given by
$$
\P(\breve M_{1,n}^{(r)}<t|Q_D)=\prod_{i=0}^m \P(\breve M_{i,1,n}^{(r)}<t|Q_D)=
\prod_{i=0}^m \P(M_{i,1,n_i}^{(r)}<t).
$$
Then,
\begin{eqnarray}
\label{equ:mix5}
\P(n \breve M_{1,n}^{(r)}<x|Q_D)&=&\exp\Bigl(-\sum\nolimits_{i=0}^m n_i f_{n_i,r}
\Bigl(\frac{n_i x}{n\alpha_i}\Bigr) \Bigr)+o(1)
\\
&=&\exp\Bigl(-n\sum\nolimits_{i=0}^m \frac{n_i}{n} f_{n_i,r}\Bigl(\frac{n_i
x}{n\alpha_i}\Bigr)
\Bigr)+o(1),\nonumber
\end{eqnarray}
where $o(1)\to 0$ as $n_i\to\infty$ for all $i=0,\ldots,m$, uniformly in $x\geq 0$.
Taking into account (\ref{equ:mix2}), $f_{n_i,r}$ in the last equation can be
changed
by $f_{k_i,r}$, $i=0,\ldots,m$. Then, by $\# D_i/n\to_P\theta_i=c_i\alpha_i$
 as $O_P(1/\sqrt{n})$,
\begin{equation}
\label{equ:mix6}
\P(n \breve M_{1,n}^{(r)}\!<\!x)=\exp\Bigl(-n\sum\nolimits_{i=0}^m \theta_i
f_{k_i,r}(c_i x)
\Bigr)+o(1).
\end{equation}
By Lemma \ref{lem:loc3}, $\P(M_{1,n}=\breve M_{1,n})\to 1$ as $n\to\infty$ and
(\ref{equ:mix3}) is proved.

Under $f_n^{(r)}(x)=1-G_r(x)$,
$$
\P(n M_{1,n}^{(r)}<x)=\exp\Bigl(-n\sum\nolimits_{i=0}^m \theta_i (1-G_r(c_i x))\Bigr)+o(1).
$$
After substitution, $t=p_{\min}x-b_n^{(r)}-\log\theta^*$,
$$
\P(np_{\min} M_{1,n}^{(r)}-b_n^{(r)}-\log\theta^*<t)
$$
$$
=\exp\Bigl(-\sum\nolimits_{i=1}^{n}\exp\bigl(-c_i c_{\min}^{-1}(t+\log n+(r-1)\log\log n-\log\Gamma(r)+\log\theta^*)+{}
$$
$$
+\log n+\log\theta_i\bigr)\times
\sum_{j=1}^{r}\bigl(c_ic_{\min}^{-1}(t+b_n^{(r)}+\log\theta^*)\bigr)^j/\Gamma(j)
\Bigr)+o(1)
$$
$$
\to
\exp(-\exp(-t))\qquad\mbox{as}\qquad n\to\infty.
$$
The Theorem \ref{teo:mix3} is proved.
\end{proof}

\begin{remark}\rm\mbox{\!\!\!}
\label{rem:1}
(i). By (\ref{equ:mix2}) the functions $f_{n_i,r}$ in (\ref{equ:mix3}) can be
changed by~$f_{r}$. 

(ii). It is easy to see that limit behavior of $M_{1,n}^{(r)}$ will be
different in general even if the original distributions are close. For example,
$$
\lim_{n\to\infty} \sup_{x} |\P(nM_{U,1,n}^{(r)}<x)-\P(n c M_{U,1,n}^{(r)}<x)|=1,
$$
for any $c\not=1$, even if $c$ is close to 1.
To prove Theorem \ref{teo:mix1} it will be important to extend the result of Theorem \ref{teo:mix3} to a series of experiments with increasing $m$ as $n\to\infty$.
\end{remark}

The essential part of Theorem \ref{teo:mix1} is the following lemma.

\begin{lemma}
\label{lem:mix1}
Let under conditions of Theorem \ref{teo:mix1} \ $p(x)$ is bounded by a positive constant $M$ and satisfies (\ref{equ:Holder}) for all $x,y\in [A,B]$. Then (\ref{equ:gform2}) holds.
\end{lemma}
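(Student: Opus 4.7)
The plan is to approximate the smooth density $p$ by step densities on refining partitions of $[A,B]$, apply an extension of Theorem~\ref{teo:mix3} to each such approximation, and recognize the resulting expression as a Riemann sum converging to the integral in (\ref{equ:gform2}). Fix a sequence $m=m_n\to\infty$ (rate to be chosen) and partition $[A,B]$ into $m$ equal cells $J_i=[x_i,x_{i+1})$ with $x_i=A+i(B-A)/m$. Setting $c_i=p(x_i)$, $Z_n=\sum_{i=0}^{m-1}c_i(x_{i+1}-x_i)$, define the step density $\tilde p_n(x)=c_i/Z_n$ on $J_i$. The H\"older hypothesis gives $\sup_{[A,B]}|\tilde p_n-p|\leq C'm^{-\alpha}$ and $Z_n=1+O(m^{-\alpha})$. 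Coupling via the quantile transform, $X_i=F^{-1}(U_i)$ and $\tilde X_i=\tilde F_n^{-1}(U_i)$ for $U_1,\ldots,U_n$ i.i.d.\ uniform, and using $p,\tilde p_n\geq p_{\min}/2$ for large $n$, yields $|X_i-\tilde X_i|\leq Km^{-\alpha}$ uniformly, hence $|nM_{1,n}^{(r)}-n\tilde M_{1,n}^{(r)}|\leq 2Knm^{-\alpha}=o(1)$ provided $m_n\gg n^{1/\alpha}$.

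Next I would extend Theorem~\ref{teo:mix3} to allow $m=m_n\to\infty$. Revisiting its proof, three ingredients must be verified uniformly in $m$: (i) the restricted maximum identity $\P(\tilde M_{1,n}^{(r)}=\breve{\tilde M}_{1,n}^{(r)})\to 1$ obtained from Lemma~\ref{lem:loc3}, which holds by a union bound over the $m-1$ interior boundary points as long as $m=o(n)$; (ii) every cell occupation number $n_i$ tends to infinity, which is ensured once $n/m\to\infty$; and (iii) the accumulated $o(1)$ remainders across the $m$ cells arising from (\ref{equ:apprf}) still sum to $o(1)$, which forces $m_n$ to grow slowly relative to the convergence rate in (\ref{equ:apprf}). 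After applying Remark~\ref{rem:1}(i) to replace each $f_{k_i,r}$ by $f_{n,r}$, this produces
\begin{equation*}
\P(n\tilde M_{1,n}^{(r)}<x)=\exp\Bigl(-n\sum_{i=0}^{m-1}\tilde\theta_i f_{n,r}(c_ix/Z_n)\Bigr)+o(1),
\end{equation*}
where $\tilde\theta_i=c_i(x_{i+1}-x_i)/Z_n$.

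The exponent is exactly the Riemann sum $n\int_A^B f_{n,r}(x\tilde p_n(u))\tilde p_n(u)\,du$. Using the uniform equicontinuity of the family $(f_{n,r})$, the relation (\ref{equ:mix2}), the H\"older bound $|\tilde p_n-p|=O(m^{-\alpha})$ and the boundedness $p\leq M$, I would replace $\tilde p_n$ by $p$ inside the integral with additive error of order $n(B-A)\,\omega(m^{-\alpha})$, where $\omega$ is the joint modulus of continuity of the family $(f_{n,r})$. Combined with the coupling estimate, this yields (\ref{equ:gform2}).

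The main obstacle is the simultaneous tuning of $m_n$: the coupling step demands $m_n\gg n^{1/\alpha}$, while the extension of Theorem~\ref{teo:mix3} requires $m_n=o(n)$ together with uniform control of the cell-level remainders. For $\alpha>1$ any $m_n=n^{\gamma}$ with $\gamma\in(1/\alpha,1)$ accommodates both. For $\alpha\leq 1$ the naive quantile coupling is too loose, and one must bypass it by comparing the Chen--Stein / Poisson approximations underlying (\ref{equ:apprf}) directly on each cell and exploiting that the exponent in (\ref{equ:gform2}), as a functional of $p$, is stable under $O(m^{-\alpha})$ perturbations once the relevant window $x\sim c^{-1}(\log n+(r-1)\log\log n)$ is accounted for. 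This last transfer is where the bulk of the technical work lies.
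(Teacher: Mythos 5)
There is a genuine gap, and it is exactly the one you flag at the end but do not resolve. Your comparison of $M_{1,n}^{(r)}$ with its step-density counterpart goes through a pathwise quantile coupling, which forces the two maxima to agree at the scale $1/n$ and hence demands $n\,m^{-\alpha}\to 0$, i.e.\ $m\gg n^{1/\alpha}$. This collides with the constraints needed to push Theorem~\ref{teo:mix3} to growing $m$: your ingredient (iii) (accumulation of the per-cell remainders) is precisely where the paper has to work hardest, and its analysis --- replacing the random cell counts $n_{i,m}$ by their expectations $k_{i,m}$ uniformly over the cells via the Kolmogorov bound $\max_i|n_{i,m}/n-\theta_{i,m}|=O_P(1/\sqrt n)$ together with the estimate $\P(\widetilde M_{s_1}^{(r)}<x)\leq \P(\widetilde M_{s_2}^{(r)}<x)+(s_2-s_1)/s_2$ --- yields the requirement $m=o(n^{1/3})$, not merely $m=o(n)$. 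So even granting your extension, the window $n^{1/\alpha}\ll m\ll n^{1/3}$ is empty unless $\alpha>3$, while the lemma must hold for every $\alpha>0$. Your proposed escape for small $\alpha$ (``compare the Chen--Stein approximations directly on each cell'') is a statement of the problem, not a proof; the entire difficulty of the lemma lives there.

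The paper avoids the $1/n$-scale coupling altogether by a monotone sandwich. Taking $c_i=\inf_{I_i}p$ and reassembling the cells so that $F_m(y_i)=F(x_i)$, the map $G_m=F_m^{-1}\circ F$ satisfies $G_m(x_2)-G_m(x_1)\geq x_2-x_1$, so every $r$-spacing of the step model stochastically dominates the corresponding true one and $\P(M_{1,n}^{(r)}<x)\geq\P(M_{1,n}^{+(r)}<x)$; taking $c_i=\sup_{I_i}p$ gives the reverse inequality. One then needs only that the two \emph{limit expressions} $\exp(-n\int f_{n,r}(xp_m)p_m)$ and $\exp(-n\int f_{n,r}(xp)p)$ merge, and since the relevant argument is $x_n\asymp\log n$, the error $x_n(p(t)-p_m(t))$ in the exponent is $o(1)$ as soon as $\|p_m-p\|_\infty=o(1/\log n)$, i.e.\ $m\gg\log^{1/\alpha}n$ --- compatible with $m=o(n^{1/3})$ for every $\alpha>0$. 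If you replace your quantile coupling by this one-sided domination (your Riemann-sum and Hölder computations then serve to identify the common limit of the two bounds), the argument closes; as written, it does not.
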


\begin{proof}
Without loss of generality assume $[A,B]=[0,1]$. Let $I_1,\ldots,I_m$ be a
partition of the interval $[0,1]$ by intervals $I_i=(x_{i},x_{i+1}]$,
$i=1,\ldots,n$; $0=x_0<x_1<\ldots<x_{m+1}=1$;  $F(x)=\int_{0}^x p(u)du$ be the
CDF corresponding to $p$. Introduce
$c_i=\inf_{x\in I_{i}} p(x)$, $y_0=0$,
\begin{equation}
\label{equ:incr}
y_{i+1}=y_i+c_i^{-1}\int_{x_i}^{x_{i+1}} p(t)dt,
\end{equation}
$J_i=(y_i,y_{i+1}]$ for $i=0,1,\ldots,m$,  and the step-wise PDF
$$
p_{m}(x)=
\begin{cases}
c_i, \quad\mbox{for}\quad x\in J_i, \; i=0,\ldots,m;\cr
0,\quad\mbox{otherwise}.\cr
\end{cases}
$$
Set $F_m(y)=\int_0^y p_m(t)dt$ for all $y>0$ is the corresponding CDF
Then, $F_m(y_i)=F(x_i)$ for all $i=0,\ldots,m+1$. Denote, $G_{m}(x)=F_m^{-1}(F(x))$ for all $x\in [0,1]$. It is easy to verify that $p(x)\geq p_n(G_m(x))$ and, therefore, $G_m(x_2)-G_m(x_1)\geq x_2-x_1$ for any $0\leq x_1<x_2\leq 1$. It means that
$$
\P(M_{1,n}^{(r)}<x)\geq \P(M_{1,n}^{+(r)}<x),
$$
where $M_{1,n}^{+(r)}$ is the maximal $r$-spacing based on a sample from the absolutely continuous distribution having PDF $p_m$.  By (\ref{equ:incr})
$$
y_{i}=\sum\nolimits_{j=0}^{i-1} c_j^{-1}\int_{x_j}^{x_{j+1}} p(t)dt=\sum\nolimits_{j=0}^i c_j^{-1} p(\tilde x_j)(x_{j+1}-x_j)
$$
for some $\tilde x_j\in I_j$, $j=0,\ldots,m$, and
$$
y_{i}-x_{i}=\sum\nolimits_{j=0}^{i-1} (c_j^{-1} p(\tilde x_j)-1)(x_{j+1}-x_j)=
\sum\nolimits_{j=0}^{i-1} c_j^{-1} (p(\tilde x_j)-c_j)(x_{j+1}-x_j).
$$
Then, by (\ref{equ:Holder})
\begin{equation}
\label{equ:pcv}
\max_{i=0,\ldots,m+1} |y_i-x_i|=y_{m+1}-1=O(m^{-\alpha})
\end{equation}
as $\max_{i=0,\ldots,m} (x_{i+1}-x_i)=O(1/m)$ as $m\to\infty$. Consider,
$$
\Delta_{n,m}^{(r)}(x)=n\Bigl(\int_{I_n} f_r(xp_m(t))p_m(t)dt - \int_{I} f_r(xp(t))p(t)dt\Bigr)
$$
$$
=n\int_{I_m} \!\!(f_r(xp_m(t))-f_r(xp(t)))p_m(t)dt+
n\int_{I_m}\!\! f_r(xp(t))(p_m(t)-p(t))dt
$$
$$
=:\Delta_{1,n,m}^{(r)}(x)+\Delta_{2,n,m}^{(r)}(x).
$$
For any fixed $\delta, M$ such that $0<\delta<M<\infty$ introduce the class ${\cal D}(\delta,M)$ of sequences $(x_n)_{n\in\Nb}$ such that $n\int_{I_m}\! f_r(x_n p(t))p(t) dt\in [\delta,M]$.
Remark that
$$
\Delta_{2,n,m}^{(r)}(x_n)=
n\int_{I_m}\! f_r(x_n p(t))p(t) (p_m(t)/p(t)-1)dt \to 0
$$
for any sequence $(x_n)\in {\cal D}(\delta,M)$, $m=m(n)\to\infty$ as $n\to\infty$,  and taking into account (\ref{equ:pcv}) and $x_n=O(\log n)$, we conclude that
$$
\Delta_{1,n,m}^{(r)}(x_n)= n \int_{I} f_r(x_n p_m(t)) p_m(t)
\Bigl(
1-\exp(-x_n(p(t)-p_m(t)))\times
$$
$$
\times\sum\nolimits_{i=1}^r \!\frac{(x_n p(t))^{i-1}}{\Gamma(i)}\Bigm/ \sum\nolimits_{i=1}^r \frac{(x_n p_m(t))^{i-1}}{\Gamma(i)}\Bigr)dt
\to 0
$$
as $n\to\infty$ for any sequence $(x_n)\in {\cal D}(\delta,M)$ under
\begin{equation}
\label{equ:pcv1}
\sup_{x\in I} |p_m(x)-p(x)| = o(\log n) \quad\mbox{as}\quad n\to\infty.
\end{equation}
By (\ref{equ:Holder}), under $m=m(n)$ with $\log^{1/\alpha} n / m \to 0$ as $n\to\infty$ there exists a sequence $\bigl(p_m(x)\bigr)_{m\in\Nb}$, such that (\ref{equ:pcv1}) holds  and the sequence of corresponding maximal $r$-spacings $M_{1,n}^{(r)}$ is satisfying (\ref{equ:mix3}). Thus, under $\log^{1/\alpha} n/m\to 0$,
$$
n\int_{I_m} f_{r}(x_n p_m(t)) p_m(t) dt - n\int_{I} f_{r}(x_n p(t)) p(t) dt \to 0
\quad\mbox{as}\quad n\to\infty,
$$
and $(x_n)\in {\cal D}(\delta,M)$ for any fixed $0\!<\!\delta\!<\! M\!<\!\infty$. Then, by (\ref{equ:mix2}),
$$
n \int_{I_m} f_{n,r}(x_n p_m(t)) p_m(t) dt - n \int_{I} f_{n,r}(x_n p(t)) p(t) dt \to 0\quad\mbox{as}\quad n\to\infty.
$$
Therefore,
\begin{equation}
\label{equ:intcnv}
\sup\nolimits_x \Bigl|
\exp\Bigl(\!- n\! \int_{I_m}\!\! f_{n,r}(x p_m(t)) p_m(t) dt\Bigr)-
\exp\Bigl(\!- n\! \int_{I}\!\! f_{n,r}(x p(t)) p(t) dt\Bigr)
\Bigr|\to 0
\end{equation}
as $n\to\infty$.

Analogously, there exists a sequence of distributions having piecewise constant PDFs $p_m$ and the corresponding CDFs $F_m$ such that the function $G_m(x)=F_m^{-1}(F(x))$ is satisfying the condition $G_m(x_2)-G_m(x_1)\leq x_2-x_1$ for all $x_2>x_1$ and (\ref{equ:intcnv}) is held. Then, for maximal $r$-spacings $M_{1,n}^{-(r)}$,
$$
\P(M_{1,n}^{(r)}<x)\leq \P(M_{1,n}^{-(r)}<x).
$$

Finally, we need to extend Theorem \ref{teo:mix3} to the case  $m=m(n)$, where $\log^{1/\alpha} n/m\to 0$ as $n\to\infty$. We use for simplicity the same notations as in Theorem~\ref{teo:mix3} and Lemma \ref{lem:loc3} equiped by the additional index $m$. Consider a sequence of PDFs $p_m$ of the form (\ref{equ:stden}) such that $c_{\min,m}>\kappa>0$, $d_m<K<\infty$ for all $m\in\Nb$ and $(J_{\tilde i,m})$ is such that $|J_{\tilde i,m}|^{-1}=O(m)$ as $m\to\infty$. Then, there exist sequences $(\delta_m)$, $(s_m)$ such that $s_m\to\infty$, $n\delta_m\to\infty$ as $n\to\infty$ and for any fixed $m$ there exist $y_1,\ldots,y_{s_m}$ such that $V_{y_i}(\delta_m)\subseteq J_{\tilde i,m}$ and $V_{y_i}(\delta_m)\cap
V_{y_j}(\delta_m)=\emptyset$ for all $i,j$, where $i\not= j$. Then, (\ref{equ:c1}) holds with $\delta=\delta_m$ as $m\to\infty$. Taking into account that
$$
\P(M_{1,n}^{(r)}>\gamma_m)\leq \P(M_{U,1,n}^{(r)}>\gamma_m c_{\min,m})\to 0\quad\mbox{as}\quad n\to\infty
$$
under $n\gamma_n\to\infty$,
we conclude that Lemma~\ref{lem:loc3} holds for the sequence $p_m$ under $m=o(n)$ as $n\to\infty$ and such that $c_{\min,m}>\kappa>0$ for all $m$.
Let
$$
\tilde f_{s,r}(x)=-s^{-1}\log \P(s M_{U,1,s}^{(r)}<x),
$$
where $M_{U,1,s}^{(r)}$ is the maximal uniform $r$-spacing, $s\in\mathbb N$. Replacement $f_{n_i,r}$ by $\tilde f_{n_i,r}$
implies the exact equality in (\ref{equ:mix5}) with $o(1)\equiv 0$.
Implication from (\ref{equ:mix5}) to (\ref{equ:mix6}) is valid if \
$$
\max_{i=1,\ldots,m} (1-n_{i,m}/k_{i,m})=o_P(\log^{-1} n)\quad\mbox{as}\quad n\to\infty
$$
and
\begin{equation}
\label{equ:int2}
\max_{i=1,\ldots,m} \sup_{x>0}|\P(M_{U,n_{i,m}}^{(r)}\!\!<\!x)-\P(M_{U,k_{i,m}}^{(r)}\!\!<\!x)|=o(m^{-1})\quad\mbox{as}\;\; m\to\infty,
\end{equation}
where $k_{i,m}=\E\, n_{i,m}=n\theta_{i,m}$. Taking into account that $n_i/n=F_{n,m}(J_{i,m})$ and $\theta_i=F_m(J_{i,m})$, where $F_{n,m}$ is the empirical CDF based on the sample of size $n$ from the distribution $F_m$,
we obtain by Kolmogorov's theorem that
$$
\sup_{m}\max_{i=1,\ldots,m} |n_{i,m}/n-\theta_{i,m}|=O_P(1/\sqrt{n})\quad\mbox{as}\quad n\to\infty.
$$
Therefore, $\max_{i=1,\ldots,m} (1-n_{i,m}/k_{i,m})=O_P(m/\sqrt{n})$ under the conditions $\theta_{i,m}>c/m$ for all $i=1,\ldots,m$, some $c>0$ and sufficiently large $m$.
Remark that
$
\P(\widetilde M_{s_1}^{(r)}<x)\leq \P(\widetilde M_{s_2}^{(r)}<x)+(s_2-s_1)/s_2
$
for any $s_1<s_2$ (see Section~\ref{sec:unif} for the representation of uniform $r$-spacings via exponential random variables). Then,
$$
\max_{i=1,\ldots,m} \sup_{x>0}|\P(M_{U,n_{i,m}}^{(r)}<x)-\P(M_{U,k_{i,m}}^{(r)}<x)|=O\bigl(\sqrt{m/n}\,\bigr)
$$
and (\ref{equ:int2}) holds under $m=o(n^{1/3})$. Now, by (\ref{equ:mix2}) we can replace $\tilde f_{n,r}$ by $f_{n,r}$ in (\ref{equ:mix6}).
Therefore, if to use a sequence $m=m(n)$ so that $c_1\log^{1/\alpha_-} n\leq m \leq c_2 n^{\beta}$
for some $\beta<1/3$, $\alpha_- <\alpha$ and $c_1,c_2>0$ with
$\min_i \theta_{i,m} \geq c / m$ for some $c >0$, the required property (\ref{equ:mix3}) holds.
\end{proof}

We use notations of Theorem \ref{teo:mix1} without the assumption $p_{\min}>0$ and $\widetilde T=\bigcup_{i=1}^s \widetilde I_{i}$ in the following lemma. 

\begin{lemma}
\label{lem:loc5}
Let $X_1,\ldots,X_n$ be a sample from an absolutely continuous distribution having PDF $p$,   $T_{\delta}=\bigcup\nolimits_{j=1}^s I_{j\delta}=\{x\in T \mid p(x)<p_+ -\delta\}$ for some $\delta>0$ and $I_{j\delta}$ be the intervals, $j=1,\ldots,s$, $s\geq q$. Introduce, $\sigma_{k,n}^{(r)}$ are such that $M_{k,n}^{(r)}=X_{\sigma_{k,n}^{(r)},n}-X_{\sigma_{k,n}^{(r)}+r,n}$. Then,
$$
\P([X_{\sigma_{k,n}^{(r)},n},X_{\sigma_{k,n}^{(r)}+r,n}]\subset \widetilde T)\to 1\quad\mbox{as}\quad
n\to\infty.
$$
\end{lemma}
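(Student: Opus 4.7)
My plan is to pass to uniform $r$-spacings via the probability integral transform and then compare the length scales of spacings sitting in the high-density region $A_2=\{x\in T\mid p(x)\geq p_+\}$ with those sitting in the strictly low-density region $T_\delta\subset\widetilde T$.

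I would start by setting $U_i=F(X_i)$ with $F(x)=\int_{A}^{x}p(u)\,du$, so that $U_{1,n}\leq\cdots\leq U_{n,n}$ are standard uniform order statistics and
$$
X_{i+r,n}-X_{i,n}=\int_{U_{i,n}}^{U_{i+r,n}}\frac{du}{p(F^{-1}(u))}.
$$
This yields two one-sided bounds: $S_{i,n}^{(r)}\leq(U_{i+r,n}-U_{i,n})/p_+$ whenever $[X_{i,n},X_{i+r,n}]\subset A_2$, and $S_{i,n}^{(r)}\geq(U_{i+r,n}-U_{i,n})/(p_+-\delta)$ whenever $[X_{i,n},X_{i+r,n}]\subset T_\delta$. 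Next I would invoke the uniform asymptotics from Section~\ref{sec:unif}, namely $nM_{U,1,n}^{(r)}=b_n^{(r)}+O_P(1)$, and observe that the same bound persists when the maximum is restricted to indices for which $[U_{i,n},U_{i+r,n}]$ lies in a fixed Borel subset $E\subset[0,1]$ of positive measure (with at most an additive $O(1)/n$ shift depending on $\lambda(E)$). The hypothesis $s\geq q$ ensures $\lambda(T_\delta)>0$, hence $\lambda(F(T_\delta))>0$.

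Combining these ingredients, the maximal $X$-spacing whose interval sits entirely in $A_2$ is bounded above by $b_n^{(r)}/(np_+)+O_P(1/n)$, while some $X$-spacing with interval entirely in $T_\delta$ has length at least $b_n^{(r)}/(n(p_+-\delta))-O_P(1/n)$. Since $1/(p_+-\delta)>1/p_+$ and $b_n^{(r)}\to\infty$, the second quantity strictly exceeds the first with probability tending to $1$, so the overall maximum cannot be realised by any interval lying fully inside $A_2$.

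The remaining case, and the one where the main difficulty lies, is the straddling maximum: $[X_{\sigma_{k,n}^{(r)},n},X_{\sigma_{k,n}^{(r)}+r,n}]$ meets both $A_2$ and its complement. The uniform asymptotics force $X_{\sigma_{k,n}^{(r)}+r,n}-X_{\sigma_{k,n}^{(r)},n}=O_P(\log n/n)$, so this interval is vanishingly short. For any fixed $y\in A_2$, the H\"older regularity~(\ref{equ:Holder}) of $p$ near $y$ gives $p\geq p_+-o(1)$ on a small neighbourhood $V_y$, and I would mimic the exchangeability-and-symmetry argument of Lemma~\ref{lem:loc3} with this local constant in place of $c_{\tilde i}$ to show $\P(y\in[X_{\sigma_{k,n}^{(r)},n},X_{\sigma_{k,n}^{(r)}+r,n}])\to 0$. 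A finite cover of the compact set $A_2$ by such neighbourhoods together with a union bound then gives $\P(A_2\cap[X_{\sigma_{k,n}^{(r)},n},X_{\sigma_{k,n}^{(r)}+r,n}]\neq\emptyset)\to 0$, which is exactly $[X_{\sigma_{k,n}^{(r)},n},X_{\sigma_{k,n}^{(r)}+r,n}]\subset\widetilde T$ with probability tending to $1$. The hard part will be verifying that the piecewise-constant argument of Lemma~\ref{lem:loc3} transfers to the general H\"older setting, for which~(\ref{equ:Holder}) is precisely the hypothesis that makes the symmetry bound go through.
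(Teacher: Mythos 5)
Your first half is sound and essentially matches the paper's own mechanism: the probability integral transform, the two one-sided comparisons between $X$-spacings and uniform spacings on $\{p\geq p_+\}$ and on $T_\delta$, and the fact that the maximal uniform $r$-spacing restricted to a fixed sub-region of positive measure is still of order $(b_n^{(r)}+O_P(1))/n$. Be aware, though, that this last ``observation'' is the technical heart of the lemma and is exactly what the paper spends most of its proof establishing — via the exchangeability computation for the indices $\tilde\sigma_{k,n}^{(r)}$ and $\hat\sigma_{k,n}^{(r)}$ (showing the top $m$ uniform $r$-spacing intervals localize in any prescribed subinterval) combined with $\P(cM_{U,k,n}^{(r)}\geq M_{U,1,n}^{(r)})\to 1$ from the exponential order-statistics representation. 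It cannot simply be asserted.

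The genuine gap is in your treatment of the straddling case. Proving $\P(y\in[X_{\sigma_{k,n}^{(r)},n},X_{\sigma_{k,n}^{(r)}+r,n}])\to 0$ for each fixed $y$ and then taking a finite cover of $A_2$ does not yield $\P(A_2\cap[X_{\sigma_{k,n}^{(r)},n},X_{\sigma_{k,n}^{(r)}+r,n}]\neq\emptyset)\to 0$: the union bound over the cover controls only the probability that one of the finitely many centers lies in the interval, and if you instead sum $\P(I\subseteq V_{y_i}(2\rho))$ over the cover, each term is of order $\rho$ (which is all the Lemma~\ref{lem:loc3}-type counting argument delivers) while the cover has on the order of $1/\rho$ elements, so the sum is $O(1)$, not $o(1)$. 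Worse, the per-point statement holds equally for $y$ in the low-density region, so if the covering argument were valid it would ``prove'' that the interval avoids all of $[A,B]$ — a contradiction. Note also that (\ref{equ:Holder}) is assumed only on $\widetilde I_j=\{x\in I_j\mid p(x)<p_+\}$, so you have no H\"older control near points of $A_2$. The paper disposes of straddling differently: it first shows $\P([X_{\sigma_{k,n}^{(r)},n},X_{\sigma_{k,n}^{(r)}+r,n}]\cap T_\delta=\emptyset)\to 0$, i.e.\ the argmax interval must meet the strictly-low-density set, and then uses that the interval has length $O_P(\log n/n)$ while, by (\ref{equ:Holder}) on $\widetilde I_j$, every point with $p<p_+-\delta$ lies at distance at least $(\delta/C)^{1/\alpha}$ from $\{p\geq p_+\}$; a vanishingly short interval meeting $T_\delta$ therefore cannot reach outside $\widetilde T$. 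You should replace the covering argument by this (or an equivalent) separation argument.
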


\begin{proof}
Consider $U_i=F(X_i)$, $i=1,\ldots,n$. Then, $U_1,\ldots,U_n$ are {\em i.i.d.} random variables having the standard uniform distribution. Remark that, $Q_{\delta}=F(T_{\delta})$ can be represented as $Q_{\delta}=\bigcup\nolimits_{j=1}^s Q_{j\delta}$, where $Q_{j\delta}=F(I_{j\delta})$ are disjoint intervals, $j=1,\ldots,s$.

Let $J$ be an arbitrary subinterval of $[0,1]$ of length $b>0$.
First we prove that for any interval $J=(x_1,x_2)\subset [0,1]$
\begin{equation}
\label{equ:unis}
\lim_{m\to\infty}\lim_{n\to\infty}
\P\Bigl(\bigcup\nolimits_{k=1}^m [U_{\tilde\sigma_{k,n}^{(r)},n},U_{\tilde\sigma_{k,n}^{(r)}+r,n}]\subseteq J\Bigr)= 1,
\end{equation}
where $\tilde\sigma_{k,n}^{(r)}$ are the such indexes that  
$U_{\tilde\sigma_{k,n}^{(r)}+r,n}-U_{\tilde\sigma_{k,n}^{(r)},n}$ are $k$-th maximal $r$-spacings. For $r=1$ this property follows immediately from symmetry
$$
\P(\tilde\sigma_{1,n}^{(1)}=i_1,\ldots,\tilde\sigma_{m,n}^{(1)}=i_m)=
(n-m+1)!/ (n+1)!
$$
for any arrangement $(i_1,\ldots,i_m)$ of indexes $(0,\ldots,n)$. Remark that
$X_{[n x_1],n}\to x_1$ and $X_{[nx_2],n}\to x_2$ as $n\to\infty$ almost sure. Then, (\ref{equ:unis}) follows immediately from
\begin{eqnarray*}
&&\mspace{-50mu}\lim_{m\to\infty}\lim_{n\to\infty}
\P\Bigl(\bigcup\nolimits_{i=1}^m \tilde\sigma_{k,n}^{(1)}\in  [n_{1\epsilon},n_{2\epsilon}]\Bigr)\\
&& \geq
\lim_{m\to\infty}\lim_{n\to\infty}
\Bigl(1-\prod\nolimits_{k=1}^m (1-(n_2-n_1)/(n-k+1)\Bigr)=1,
\end{eqnarray*}
where $n_{1\epsilon}=[n(x_1+\epsilon)]$ and $n_{2\epsilon}=[n(x_2-\epsilon)]$.
For the case $r>1$ set $\hat\sigma_{1,n}^{(r)}=\tilde\sigma_{1,n}^{(r)}$ and restrict the set of indexes to $G_1=\{i\in 0,\ldots,n-r+1 \mid |i-\hat\sigma_{1,n}^{(r)}|\geq r\}$. Recursively define 
$\hat\sigma_{k,n}^{(r)}$ such that $S_{\hat\sigma_{k,n}^{(r)},n}^{(r)}=\max\{S_{i,n}^{(r)},i\in G_{i-1}\}$ and $G_k=\{i\in G_{k-1} \mid |i-\hat\sigma_{k,n}|\geq r\}$, where $S_{i,n}^{(r)}=U_{i+r,n}-U_i$, $i=0,\ldots,n-r+1$. Then,
$$
\P(\hat\sigma_{1,n}^{(r)}=i_1,\ldots,\hat\sigma_{m,n}^{(r)}=i_m)=
\Bigl( \prod\nolimits_{k=1}^{m} (n-r+2-k(2r-1))\Bigr)^{-1}
$$
for any $i_1,\ldots,i_m$ such that $|i_s-i_r|\geq r$; $s,r\in\{0,\ldots,n-r+1\}$. Therefore, by
$$
\P\Bigl(\bigcup\nolimits_{i=1}^{m(2r-1)} \tilde\sigma_{k,n}^{(r)}\in [n_{1\epsilon},n_{2\epsilon}]\Bigr)\geq
\P\Bigl(\bigcup\nolimits_{i=1}^{m} \hat\sigma_{k,n}^{(r)}\in [n_{1\epsilon},n_{2\epsilon}]\Bigr)
$$
and
$
\lim_{m\to\infty}\lim_{n\to\infty}
\P\Bigl(\bigcup\nolimits_{i=1}^m \hat\sigma_{k,n}^{(r)}\in [n_{1\epsilon},n_{2\epsilon}]\Bigr)=1
$
we obtain (\ref{equ:unis}).

Secondly, remark that
$$
\P\Bigl(c\sum\nolimits_{i=1}^{n-m} E_i/(n-i+1)\geq \sum\nolimits_{i=1}^{n} E_i/(n-i+1)\Bigr)\to 1
$$
as $n\to\infty$ for any fixed $c>1$, where $E_1,E_2,\ldots$ are {\em i.i.d.} the
standard exponential random variables. Using representation for the exponential
order statistics $E_{k,n}=\sum_{i=1}^k E_i/(n-i+1)$ and the exponential
distribution representation for  uniform spacings, we obtain that
$
\P(c M_{U,k,n}^{(r)} \geq M_{U,1,n}^{(r)})\to 1
$
as $n\to\infty$ for any fixed $c>1$ and $k\in\mathbb N$.

Finally, taking into account that
$
(p_+ -\delta) (F^{-1}(x_2)-F^{-1}(x_1))\geq x_2-x_1
$
and
$
p_+(F^{-1}(y_2)-F^{-1}(y_1))\leq y_2-y_1
$
for any $x_1,x_2\in B_{j\delta}$, $x_1<x_2$ and $y_1,y_2\notin B$, $y_1<y_2$,
the first and the second parts of the prove one gets
$$
\P\bigl([X_{\sigma_{k,n}^{(r)},n},X_{\tilde\sigma_{k,n}^{(r)}+r,n}]\cap T_{\delta}=\emptyset\bigr)\to 0\quad
\mbox{as}\quad n\to\infty.
$$
Therefore, $\P\bigl([X_{\sigma_{k,n}^{(r)},n},X_{\tilde\sigma_{k,n}^{(r)}+r,n}]\subset
\widetilde T\bigr)\to 1$ as $n\to\infty$.
\end{proof}

\begin{remark}
\label{rem:2}
\rm
Let $x_0$ be such that $p(x_0)>p_{\min}$ and $T_{\delta}=\{x\mid p(x)<p(x_0)-\delta\}= \bigcup\nolimits_{j=1}^s I_{j\delta}$ for some $\delta>0$, where
$I_{j\delta}$ are finite or infinite disjoint intervals. Then, by Lemma
\ref{lem:loc5}
$$
\P\Bigl(x_0\in [X_{\sigma_{k,n}^{(r)},n},X_{\sigma_{k,n}^{(r)}+r,n}]\Bigr)\to 0\quad
\mbox{as}\quad n\to\infty.\medskip
$$
\end{remark}

\begin{lemma}
\label{lem:loc}
Let $g(x)=n\int_{\widetilde T} f_r(xp(u))p(u)du$, $\bar g(x)=n\int_{\overline T} f_r(xp(u))p(u)du$, where $\overline T=\{x\in [A,B]\mid p(x)>p_+\}$. Then, for any $M>0$ and any sequence $\{x_n\}$: $\limsup_{n\to\infty}  g(x_n)\leq M$,
$$
\lim_{n\to\infty} \bar g(x_n)=0.
$$
\end{lemma}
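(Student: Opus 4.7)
The plan is to exploit the monotonicity and the exponential decay of $f_r$, combined with the bound $\limsup g(x_n)\leq M$, to squeeze $\bar g(x_n)$ to zero.

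First, since $p(u)<p_+$ on $\widetilde T$ and $f_r$ is a decreasing function, I get
$$g(x)\geq n\,f_r(xp_+)\,m,\qquad m:=\int_{\widetilde T}p(u)\,du.$$
The constant $m$ is strictly positive: by the definition of $\essinf$, the set $\widetilde T$ has positive Lebesgue measure, and $p>0$ on $T\supseteq\widetilde T$. Hence $\limsup g(x_n)\leq M$ yields $n\,f_r(x_n p_+)\leq 2M/m$ for all sufficiently large $n$. The same lower bound on $g$ forces $x_n\to\infty$: if a subsequence $x_{n_k}$ stayed bounded by some $K$, then $f_r(x_{n_k}p_+)\geq f_r(Kp_+)>0$ and $n_k f_r(x_{n_k}p_+)\to\infty$, contradicting the bound.

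Next I would split $\overline T$ at level $p_++\eta$ for a small parameter $\eta>0$: set $\overline T_\eta^{\mathrm{in}}=\{p_+<p\leq p_++\eta\}$ and $\overline T_\eta^{\mathrm{out}}=\{p>p_++\eta\}$. On $\overline T_\eta^{\mathrm{in}}$ the monotonicity of $f_r$ (now used the other way) and $p(u)\leq p_++\eta$ give
$$n\int_{\overline T_\eta^{\mathrm{in}}}f_r(x_n p(u))\,p(u)\,du\leq n\,f_r(x_n p_+)(p_++\eta)\,\lambda(\overline T_\eta^{\mathrm{in}})\leq \frac{2M(p_++\eta)}{m}\,\lambda(\overline T_\eta^{\mathrm{in}}).$$
Since $\bigcap_{\eta>0}\overline T_\eta^{\mathrm{in}}=\emptyset$, we have $\lambda(\overline T_\eta^{\mathrm{in}})\to 0$ as $\eta\to 0$, so this contribution is made smaller than any prescribed $\delta>0$ by fixing $\eta$ small.

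The main obstacle — the outer piece $\overline T_\eta^{\mathrm{out}}$ — is handled by the exponential decay of $f_r$. Writing $f_r(y)=e^{-y}P_r(y)$ for a polynomial $P_r$ of degree $r-1$ with positive coefficients, one has
$$\frac{f_r(x_n(p_++\eta))}{f_r(x_n p_+)}=e^{-x_n\eta}\cdot\frac{P_r(x_n(p_++\eta))}{P_r(x_n p_+)},$$
and the polynomial ratio converges to $\bigl((p_++\eta)/p_+\bigr)^{r-1}$ as $x_n\to\infty$. Thus this ratio is bounded by $C\,e^{-x_n\eta/2}$ for large $n$; combined with $n\,f_r(x_n p_+)\leq 2M/m$ this gives $n\,f_r(x_n(p_++\eta))\to 0$. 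Since on $\overline T_\eta^{\mathrm{out}}$ the integrand is bounded by $f_r(x_n(p_++\eta))\,p(u)$ and $\int p\leq 1$, the $\overline T_\eta^{\mathrm{out}}$ contribution vanishes. Collecting the two estimates gives $\limsup_n\bar g(x_n)\leq\delta$ for every $\delta>0$, so $\bar g(x_n)\to 0$.
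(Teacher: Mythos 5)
Your proof is correct, and it rests on the same central mechanism as the paper's: the hypothesis $\limsup_n g(x_n)\le M$ forces $nf_r(x_np_+)=O(1)$ and hence $x_n\to\infty$ with $x_np_+\ge \log n+o(\log n)$, after which a strict multiplicative gap between density values on the two sets, combined with the factor $e^{-y}$ in $f_r(y)=e^{-y}\sum_{i=1}^ry^{i-1}/\Gamma(i)$, kills the extra factor of $n$ over $\overline T$. Where you genuinely differ is in how that gap is manufactured. The paper shrinks the reference set: it introduces $g_\delta$ over $\widetilde T_\delta=\{p<p_+-\delta\}$, uses $g_\delta(x_n)\le g(x_n)\le M$ and a mean-value representation $g_\delta(x_n)=nf_r(x_n\tilde p_n)\tilde p_n$ with $\tilde p_n\le p_+-\delta$ to get $x_n\tilde p_n\ge\log n+o(\log n)$, and then concludes $x_n\bar p_n\ge\frac{p_+}{p_+-\delta}\,\log n+o(\log n)$ for every point of $\overline T$ at once, so no decomposition of $\overline T$ is needed. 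You instead keep the full $\widetilde T$, so your lower bound on $x_n$ carries no slack at the level $p_+$, and you recover the slack by splitting $\overline T$ at $p_++\eta$: the outer piece is handled exactly as in the paper (via the ratio $f_r(x_n(p_++\eta))/f_r(x_np_+)\le Ce^{-x_n\eta}$), while the thin shell $\{p_+<p\le p_++\eta\}$ costs you an extra continuity-from-above argument for Lebesgue measure. Both routes are sound; the paper's placement of the $\delta$-gap below $p_+$ makes the shell step unnecessary, whereas your version is more explicit about the decay estimate and avoids the paper's somewhat informal mean-value representation (which, as written there, even drops the factor $\lambda(\widetilde T_\delta)$).
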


\begin{proof}
Introduce 
$
g_{\delta}(x)=n\int_{\widetilde T_{\delta}} f_r(xp(u))p(u)du.
$
Then,
$$
g_{\delta}(x_n)=n f_r(x_n \tilde p_n) \tilde p_n \quad \mbox{and} \quad g(x_n)=n f_r(x_n \bar p_n) \bar p_n
$$
for some $\tilde p_n\in [p_{\min},p_+-\delta]$ and $\bar p_n\geq p_+$. Taking into account that $g_{\delta}(x_n)\leq g(x_n)\leq M$ under sufficienlly large $n$ we conclude that $x_n \tilde p_n \geq \log n+o(\log n)$. Then, 
$
x_n \bar p_n \geq \bar p_n / \tilde p_n \log n  +o(\log n)\geq p_+/(p_+-\delta) \log n+o(\log n)
$
and, therefore, $\bar g(x_n)\to 0$ as $n\to\infty$.
\end{proof}

\begin{proof}[Proof of Theorem \ref{teo:mix1}]
Let $\alpha\!=\!\int_{\widetilde T} p(u)du$ and $\overline{M}_{1,k}^{(r)}$ be the maximal $r$-spacing based on a sample of size $k$ from the truncated distribution with the PDF $p^*(x)=\alpha^{-\!1} p(x) \indi_{\{x\in \widetilde T\}}$.
By Lemma \ref{lem:loc5} and the law of large numbers,  
\begin{eqnarray*}
\P\bigl(nM_{1,n}^{(r)}<x)&=&\P\bigl(nM_{1,n}<x\,\bigm|\,[X_{\sigma_n^{(r)}\!\!\!,\,n},X_{\sigma_n^{(r)}\!\!+r,n}]\subset\widetilde T\bigr)+o(1)\\
&=&\P\bigl(n\overline{M}_{1,k}^{(r)}<x\bigr)+o(1),
\end{eqnarray*}
where $M_{1,n}^{(r)}=X_{\sigma_{n}^{(r)}\!\!+r,n}-X_{\sigma_{n}^{(r)}\!\!\!,\,n}$
and $k=[n\alpha]$. By Lemma \ref{lem:mix1} and (\ref{equ:mix3}),
\begin{eqnarray*}
\P(n\overline M_{1,k}^{(r)}<x)&=&\exp\Bigl(-k\int_{\widetilde T} f_{k,r}(\alpha xp^*(u))p^*(u)du\Bigr)+o(1) \\
&=&\exp\Bigl(-n\int_{\widetilde T} f_{r}(xp(u))p(u)du\Bigr)+o(1).
\end{eqnarray*}
By Lemma {\ref{lem:loc}},  
\begin{eqnarray*}
\sup_x\Bigl|\exp\Bigl(\!\!&-&\!\!n\int_{\widetilde T} f_{r}(x p(u))p(u)du\Bigr)-
\exp\Bigl(-n\int_{A}^B f_{r}(x p(u))p(u)du\Bigr) 
\Bigr| \\
&=&\sup_x\bigl[\exp(-\tilde g(x)) (1-\exp(-\bar g(x)))\bigr]\to 0 
\end{eqnarray*}
as $n\to\infty$. The theorem is proved. 
\end{proof}

Now we are going to prove results of Section~\ref{sec:wide}.

\begin{proof}[Proof of Theorem \ref{teo:rspacing}]
We use essentially the same arguments
as Deheuvels \cite{dvs86} to prove (i)-(iii),  with
\begin{equation}
\label{equ:spsr}
S_{n-j,n}^{(r)}=\sum\nolimits_{l=0}^{r-1} S_{n-j+l,n}.
\end{equation}
for $j=1,\ldots,n-r$. More precisely, Deheuvels \cite{dvs86} proved the result for
$r=1$ in two steps. On one hand, for any fixed $N$, he proved the convergence
of
$c_n^{-1}(S_{n-1,n}^{(r)},\ldots,S_{n-N,n}^{(r)})$ ($c_n=a_n$ in case (i),
$c_n=b_n$
in case (ii) and $c_n=(B-b_n)$ in case (iii)) to some specified distribution
$H_{N}$ as $n\to\infty$. On the other hand,  for any fixed $x$ he  proved that
\begin{equation}
\label{equ:spsrn}
\limsup_{n\to\infty} \P\Bigl(\bigcup\nolimits_{j=N+1}^{n-1}\{c_n^{-1} S_{n-j,n}^{(r)}>x\} \Bigr)
\to 0\quad \mbox{as}\quad N\to\infty.
\end{equation}
Then, the distribution of $k$-th maximal $r$-spacing is
the limit distribution of $k$-th maximal $r$-spacing from $H_N$ as $N\to\infty$.
The limit distributions of $c_n^{-1}(S_{n(r)-1,n}^{(r)},\ldots$
$S_{n(r)-N,n}^{(r)})$ for $r>1$ are obtained in a similar way using
(\ref{equ:spsr}). Taking into account that
$$
\P\Bigl(\bigcup\nolimits_{j=N+1}^{n-1}\{c_n^{-1} S_{n-j,n}^{(r)}>x\} \Bigr)\leq  \P\Bigl(\bigcup\nolimits_{j=N+r}^{n-1}\{c_n^{-1} S_{n-j,n}>x/r\} \Bigr)
$$
and (\ref{equ:spsr}), we obtain (\ref{equ:spsrn}) for $r>1$.
\end{proof}

The essential part of Corollary \ref{cor:symmetry} is Lemma \ref{lem:loc5}.

\begin{proof}[Proof of Corollary \ref{cor:symmetry}]
Let $m_F$: $F(m_F)=1/2$ be the median. We lose no generality by the assumption $m_F=0$.
Then, by symmetry the original distribution function can be represented as a mixture
$F(x)=1/2 F_+(x)+1/2 F_-(x)$, where
$$
F_+(x)=1-F_{-}(-x)=(2F(x)-1)\indi_{\{x>0\}}.
$$
Let $M_{1,k,n_1}^{(r)}$ and $M_{2,k,n_2}^{(r)}$ be $k$-th maximal $r$-spacings based on the sample from the distributions having CDFs $F_+$ and $F_{-}$ respectively. It is clear, that $M_{1,k,n_1}^{(r)}$ and $M_{2,k,n-n_1}^{(r)}$ are identically distributed under $n=2n_1$. Moreover, $F_+(0_+)>0$ and, therefore, Theorem \ref{teo:rspacing} is applicable to a sample from the distribution $F_+$. By Lemma \ref{lem:loc5} with Remark \ref{rem:2},
$$
\bigl|\P(M_{k,n}^{(r)}<x|Q_D)-\P(M_{1,k,n_1}^{(r)}<x)\P(M_{2,k,n-n_1}^{(r)}<x)\bigr|\to 0 \quad\mbox{as}\quad n\to\infty
$$
in probability,
where $Q_{D}=\{X_{j}\in [a,b), j\in D\}\cap \{X_{j}\in [b,c],j\notin D\}$ with $\# D=n_1$
Then, taking into account that $n_1/n\to 1/2$ as $n\to\infty$ in probability the result follows immediately by Theorem \ref{teo:rspacing} for $M_{1,k,n_1}^{(r)}$ and $M_{2,k,n-n_1}^{(r)}$.
\end{proof}

\begin{remark}\rm
Lemma \ref{lem:loc5} allows to use mixing of singular
distributions with
different tails, but formulation of general results of such type may be complex.
\end{remark}


\bibliographystyle{elsarticle-num}
\bibliography{base}

\begin{thebibliography}{10}
\expandafter\ifx\csname url\endcsname\relax
  \def\url#1{\texttt{#1}}\fi
\expandafter\ifx\csname urlprefix\endcsname\relax\def\urlprefix{URL }\fi
\expandafter\ifx\csname href\endcsname\relax
  \def\href#1#2{#2} \def\path#1{#1}\fi

\bibitem{lwr88}
E.~S. Lander, M.~S. Waterman, Genomic mapping by fingerprinting random clones:
  A mathematical analysis, Genomics 2~(3) (1988) 231--239.
\newblock \href {http://dx.doi.org/10.1016/0888-7543(88)90007-9}
  {\path{doi:10.1016/0888-7543(88)90007-9}}.

\bibitem{roa95}
J.~C. Roach, Random subcloning, Genome Res. 5 (1995) 464--473.
\newblock \href {http://dx.doi.org/10.1101/gr.5.5.464}
  {\path{doi:10.1101/gr.5.5.464}}.

\bibitem{lev39}
P.~P. Levy, Sur le division d'un segment par des points choisis au hazard, C.
  R. Acad. Sci. Paris 208 (1939) 147--149.

\bibitem{wss59}
L.~Weiss, The limiting joint distribution of the largest and smallest sample
  spacings, Ann. Math. Statist. 30 (1959) 590--593.
\newblock \href {http://dx.doi.org/10.1214/aoms/1177706274}
  {\path{doi:10.1214/aoms/1177706274}}.

\bibitem{sig78}
A.~F. Siegel, Random arcs on the circle, J. Appl. Probab. 15~(4) (1978)
  774--789.
\newblock \href {http://dx.doi.org/10.2307/3213433}
  {\path{doi:10.2307/3213433}}.

\bibitem{sig79}
A.~F. Siegel, Asymptotic coverage distributions on the circle, Ann. Probab.
  7~(4) (1979) 651--661.
\newblock \href {http://dx.doi.org/10.1214/aop/1176994988}
  {\path{doi:10.1214/aop/1176994988}}.

\bibitem{shl82}
A.~F. Siegel, L.~Holst, Covering the circle with random arcs of random sizes,
  J. Appl. Probab. 19~(2) (1982) 373--381.
\newblock \href {http://dx.doi.org/10.2307/3213488}
  {\path{doi:10.2307/3213488}}.

\bibitem{hul03}
T.~Huillet, Random covering of the circle: the size of the connected
  components, Adv. in Appl. Probab. 35~(3) (2003) 563--582.
\newblock \href {http://dx.doi.org/10.1239/aap/1059486818}
  {\path{doi:10.1239/aap/1059486818}}.

\bibitem{km63}
M.~G. Kendall, P.~A.~P. Moran, Geometrical probability, Griffin's Statistical
  Monographs \& Courses, No. 10, Hafner Publishing Co., New York, 1963.

\bibitem{hst80}
L.~Holst, On multiple covering of a circle with random arcs, J. Appl. Probab.
  17~(1) (1980) 284--290.
\newblock \href {http://dx.doi.org/10.2307/3212948}
  {\path{doi:10.2307/3212948}}.

\bibitem{dkr92}
A.~Dembo, S.~Karlin, Poisson approximations for {$r$}-scan processes, Ann.
  Appl. Probab. 2~(2) (1992) 329--357.
\newblock \href {http://dx.doi.org/10.1214/aoap/1177005707}
  {\path{doi:10.1214/aoap/1177005707}}.

\bibitem{chn75}
L.~H.~Y. Chen, Poisson approximation for dependent trials, Ann. Probab. 3
  (1975) 534--545.
\newblock \href {http://dx.doi.org/10.1214/aop/1176996359}
  {\path{doi:10.1214/aop/1176996359}}.

\bibitem{glz94}
J.~Glaz, J.~Naus, M.~Roos, S.~Wallenstein, Poisson approximations for the
  distribution and moments of ordered {$m$}-spacings, J. Appl. Probab. 31A
  (1994) 271--281.
\newblock \href {http://dx.doi.org/10.2307/3214961}
  {\path{doi:10.2307/3214961}}.

\bibitem{brb92}
P.~Barbe, Limiting distribution of the maximal spacing when the density
  function admits a positive minimum, Stat. Probab. Lett. 14~(1) (1992) 53--60.
\newblock \href {http://dx.doi.org/10.1016/0167-7152(92)90210-V}
  {\path{doi:10.1016/0167-7152(92)90210-V}}.

\bibitem{dvs86}
P.~Deheuvels, On the influence of the extremes of an i.i.d.\ sequence on the
  maximal spacings, Ann. Probab. 14~(1) (1986) 194--208.
\newblock \href {http://dx.doi.org/10.1214/aop/1176992622}
  {\path{doi:10.1214/aop/1176992622}}.

\bibitem{hus88}
J.~H{\"u}sler, Maximal, nonuniform spacings and the coverage problem, J. Appl.
  Probab. 25~(3) (1988) 519--528.
\newblock \href {http://dx.doi.org/10.2307/3213981}
  {\path{doi:10.2307/3213981}}.

\bibitem{pyk65}
R.~Pyke, Spacings ({W}ith discussion), J. Roy. Statist. Soc. Ser. B 27 (1965)
  395--449.

\bibitem{rot86}
H.~Rootz{\'e}n, Extreme value theory for moving average processes, Ann. Probab.
  14~(2) (1986) 612--652.
\newblock \href {http://dx.doi.org/10.1214/aop/1176992534}
  {\path{doi:10.1214/aop/1176992534}}.

\bibitem{llr83}
M.~R. Leadbetter, G.~Lindgren, H.~Rootz{\'e}n, Extremes and related properties
  of random sequences and processes, Springer Series in Statistics,
  Springer-Verlag, 1983.

\bibitem{r11}
{R Development Core Team}, \href{http://www.R-project.org/}{A language and
  environment for statistical computing. {R} foundation for statistical
  computing} (2011).
\newline\urlprefix\url{http://www.R-project.org/}

\end{thebibliography}

\end{document}